\definecolor{Gray}{gray}{0.85}
\newcolumntype{a}{>{\columncolor{Gray}}c}
\title{On the Inverse of Forward Adjacency Matrix}
\date{\vspace{-5ex}}
\author{Pritam Mukherjee\\National University of Singapore\\mukherjee.hve@gmail.com \and L. Satish\\Indian Institute of Science, Bangalore\\satish@iisc.ac.in}
\newtheorem{theorem}{Theorem}
\newtheorem{corollary}{Corollary}[theorem]
\newtheorem{lemma}[theorem]{Lemma}
\theoremstyle{definition}
\newtheorem{definition}{Definition}[section]
\theoremstyle{remark}
\newtheorem*{remark}{Remark}
\newtheorem*{example}{Example}
\begin{document}
\maketitle
\thispagestyle{empty}
\pagestyle{empty}
\begin{abstract}
During routine state space circuit analysis of an arbitrarily connected set of nodes representing a lossless LC network, a matrix was formed that was observed to implicitly capture connectivity of the nodes in a graph similar to the conventional incidence matrix, but in a slightly different manner. This matrix has only 0, 1 or -1 as its elements. A sense of direction (of the graph formed by the nodes) is inherently encoded in the matrix because of the presence of -1. It differs from the incidence matrix because of leaving out the datum node from the matrix. Calling this matrix as forward adjacency matrix, it was found that its inverse also displays useful and interesting physical properties when a specific style of node-indexing is adopted for the nodes in the graph. The graph considered is connected but does not have any closed loop/cycle (corresponding to closed loop of inductors in a circuit) as with its presence the matrix is not invertible. Incidentally, by definition the graph being considered is a tree. The properties of the forward adjacency matrix and its inverse, along with rigorous proof, are presented.
\end{abstract}

\section{Introduction}\label{sec:introduction}
Incidence matrix or nodal incidence matrix is frequently encountered in graph theory \cite{gross_yellen}. This matrix essentially captures how nodes are interconnected in an undirected graph. However, while gathering the differential equations of a circuit formed by Kirchoff's voltage and current laws (or analogous equations in lumped mechanical circuits) which is also known as state space modelling \cite{chen,khalil,hinrichsen_pritchard}, often a slightly different matrix is encountered which has "0", "1" or "-1" as its entries. After defining the graph, the matrix will be formally defined in subsequent sections. Before that let us call it as \emph{Forward adjacency matrix} and denote it by $\mathbf{A}$. It is observed that $\mathbf{A}$ entirely upper-triangular. The most noteworthy features of $\mathbf{A}$ is the presence of "-1" which clearly indicates that it somehow preserves the sense of direction present in the graph within itself. This article proves an important feature of $\mathbf{A}^{-1}$.

As will be evident from the definition, $\mathbf{A}$ becomes singular if loops/cycles are present in the graph; therefore, the graph considered will be connected but acyclic. The considered graph, therefore, is essentially a tree by definition. Following Let a specific node be denoted as "$n$" with its index-number, say "$i$", as the subscript. Let the indexing strictly follow the rule:  \emph{if one travels from node-$n_0$ to any node-$n_i$, successively encountered node numbers must always increase}. For convenience, we start from a terminal node of a radial branch and work our way into the graph. This starting node is denoted as $n_0$. Note: the properties of $\mathbf{A}^{-1}$ , derived in this article, lose their significance if, in case, the nodes are randomly/arbitrarily numbered.

Note: The present work was independently carried out by the authors as a part of proving a fundamental theorem of circuit theory. It is regrettable that the authors were unaware of the previous efforts in literature that dealt with the same matrix and arrived at the same result, although following a different approach. As it happens, such a matrix has previously been studied and reported in the literature \cite{mayeda,miller,hale,resh,branin} and the same result is proved in this paper as well. However, a careful examination would reveal that the proof presented in this article is original and, unlike the previous works, established starting from the first principles of matrix inversions and linear algebra and so relatively much easier to follow. Therefore, the authors intend to retain this article on an open platform to present an alternative proof of an established result in graph theory. A critical comparison of the previous efforts with the present work is given below.

While works on similar matrices can be found in \cite{mayeda} and \cite{miller}, the pertinent property of the inverse of "a non-singular submatrix of the incidence matrix" is, to the best of authors' knowledge, due to Hale, Resh and Branin. Hale, in \cite{hale}, explains how to construct the inverse of the mentioned matrix avoiding brute-force matrix inversion but omits its "lengthy" proof. Resh \cite{resh} provides an interesting proof by adding edges to the graph followed by an elegant use of the orthogonality of incidence matrix and, in his nomenclature, "circuit matrices". Branin mentions in \cite{branin,branin_1} that such a result is a readily obtainable corollary of a previously proved theorem of his: \emph{The inverse of the branch-node matrix $A_T$ for any tree is equal to the transpose of the corresponding node-to-datum path matrix $B_T$}. He also mentions that the theorem was similarly but independently proved by Ponstein \cite{ponstein}. The proof of this theorem, as presented by Branin and Ponstein, and quite naturally, makes extensive use of the properties of topological matrices in graph theory. On the contrary, the present work assumes no prior knowledge of graph theory. Unfortunately, authors of this article were unaware of the previous efforts while submitting the initial version and, as it happens, this article proves the same result that Hale mentioned in \cite{hale}.

\section{The Graph}\label{sec:the_graph}
Consider the graph as shown in Fig.~\ref{fig:1} which shows nodes and edges. The direction shown on every edge represents moving towards increasing node-number. Apart from numbering nodes, important features of this graph are:
\begin{itemize}
	\item Edges do not construct a closed loop.
	\item There is no isolated node, i.e., for every node there exists at least one edge that is connected to it.
\end{itemize}

\begin{figure}[!h]
\centering
\begin{tikzpicture}
\draw (0,0)node[ocirc]{} to (0,-1)node[ocirc]{} to (0,-2) to (0,-3) to (0,-4)node[ocirc]{} to (0,-5)node[ocirc]{};
\draw (0,-2) to (1,-2)node[ocirc]{};
\draw (0,-2)node[ocirc]{} to (-1,-2)node[ocirc]{} to (-2,-2)node[ocirc]{};
\draw (0,-3)node[ocirc]{} to (-1,-3) to (-2,-3)node[ocirc]{};
\draw (-1,-3)node[ocirc]{} to (-1,-4)node[ocirc]{} to (-1,-5)node[ocirc]{};
%%%%%%%%%%%%%%% naming nodes %%%%%%%%%%%%%%%%%%%%%%
\node [right=1pt] at (0,0) {$n_0$};
\node [right=1pt] at (0,-1) {$n_1$};
\node [right=1pt] at (0,-2.2) {$n_2$};
\node [right=1pt] at (0,-3) {$n_3$};
\node [right=1pt] at (0,-4) {$n_4$};
\node [right=1pt] at (0,-5) {$n_5$};
\node [right=1pt] at (1,-2) {$n_6$};
\node [above=1pt] at (-1,-2) {$n_7$};
\node [left=1pt] at (-2,-2) {$n_8$};
\node [above=1pt] at (-1,-3) {$n_9$};
\node [left=1pt] at (-2,-3) {$n_{10}$};
\node [left=1pt] at (-1,-4) {$n_{11}$};
\node [left=1pt] at (-1,-5) {$n_{12}$};
%%%%%%%%%%%%%%% drawing arrows %%%%%%%%%%%%%%%%%%%%%%
\filldraw[draw=black,fill=black] (0,-0.6) --+ (0.1,0.2) --+ (-0.1,0.2);
\filldraw[draw=black,fill=black] (0,-1.6) --+ (0.1,0.2) --+ (-0.1,0.2);
\filldraw[draw=black,fill=black] (0,-2.6) --+ (0.1,0.2) --+ (-0.1,0.2);
\filldraw[draw=black,fill=black] (0,-3.6) --+ (0.1,0.2) --+ (-0.1,0.2);
\filldraw[draw=black,fill=black] (0,-4.6) --+ (0.1,0.2) --+ (-0.1,0.2);
\filldraw[draw=black,fill=black] (0.75,-2) --+ (-0.2,0.1) --+ (-0.2,-0.1);
\filldraw[draw=black,fill=black] (-0.6,-2) --+ (0.2,0.1) --+ (0.2,-0.1);
\filldraw[draw=black,fill=black] (-1.6,-2) --+ (0.2,0.1) --+ (0.2,-0.1);
\filldraw[draw=black,fill=black] (-0.6,-3) --+ (0.2,0.1) --+ (0.2,-0.1);
\filldraw[draw=black,fill=black] (-1.6,-3) --+ (0.2,0.1) --+ (0.2,-0.1);
\filldraw[draw=black,fill=black] (-1,-3.6) --+ (0.1,0.2) --+ (-0.1,0.2);
\filldraw[draw=black,fill=black] (-1,-4.6) --+ (0.1,0.2) --+ (-0.1,0.2);
\end{tikzpicture}
\caption{The graph}
\label{fig:1}
\end{figure}

Parameters that are going to be extensively used hereafter are defined and discussed next.
\begin{definition}\label{definition:2_1}
Traveling in the direction of increasing node-number is denoted as \emph{traveling forward}.
\end{definition}
\begin{remark}
	Traveling forward does not necessarily have to begin from $n_0$. Further, it is not possible to travel forward from the last node of a radial branch, e.g, $n_5$, $n_6$, $n_8$, $n_{10}$ and $n_{12}$ in Fig.~\ref{fig:1}.
\end{remark}

\begin{definition}\label{definition:2_2}
	$n_i$ is said to be \emph{forward-adjacent} to $n_k$, if $n_i$ is adjacent to $n_k$ and $i<k$.
\end{definition}

\begin{definition}\label{definition:2_3}
	$n_i$ is said to be \emph{forward-connected} to $n_j$ if it is possible to go from $n_i$ to $n_j$ by strictly \emph{traveling forward}. The route for traveling forward from $n_i$ to $n_j$ is denoted as $R_{i\rightarrow j}$.
	\begin{remark}
		The node $n_i$ is forward-adjacent to $n_j$ is the simplest case of $n_i$ being forward connected to $n_j$. It is obvious that if $n_i$ is forward-connected but not forward-adjacent to $n_j$, there should exist one or more forward-adjacent nodes $n_{k_\beta}$ where $i<k_\beta<j$. Mathematically, in such a case, there will exist nodes $n_{k_1}$, $n_{k_2}$, ..., $n_{k_p}$ ($1\leq p<j-i$) \footnote[1]{The inequality $1\leq p<j-i$ comes because $n_i$ and $n_j$ cannot have more than $j-i-1$ nodes in between them. Consider $n_2$ and $n_5$ for example. The intermediate nodes are $n_3$ and $n_4$, so $5-2-1=2$. Of course, this number can be less than this. Consider $n_{10}$ and $n_2$ for instance. There are two intermediate nodes: $n_3$ and $n_9$ which is less than $10-2-1=7$.} such that $n_{k_\beta}$ is forward-adjacent to $n_{k_{\beta+1}} ~\forall~1\leq\beta\leq p-1$. According to Definition~\ref{definition:2_2}, this implies $i<k_1<k_2<...<k_p<j$.
	\end{remark}
\end{definition}

\begin{theorem}\label{theorem:1}
	For each $n_i$, there exists a unique $R_{0\rightarrow i}$. \footnote[2]{In other words, one route to travel-forward from $n_0$ to an arbitrary $n_i$ always exists and more than one route does not exist. This is important to prove as definition of the graph  does not readily imply this.}
	\begin{proof}
        Since there is no isolated node in the graph, existence of $R_{0\rightarrow i}$ is trivial. Therefore we focus on its uniqueness. Consider two possible routes $R_{0\rightarrow i}$ and $R_{0\rightarrow i}'$. Now, $R_{0\rightarrow i}$ and $R_{0\rightarrow i}'$ have common nodes $n_0$ and $n_i$. If there is no other common node as shown in Fig.~\ref{fig:2}(a) or if there are more common nodes as shown in Fig.~\ref{fig:2}(b), at least one closed-loop exists which violates the initial consideration. Therefore, $R_{0\rightarrow i}$ is unique.
        \begin{figure}[h]
          \centering
            \begin{tikzpicture}
              \draw (-6,0)node[left=1pt]{$n_0$} -- (-2,0)node[right=1pt]{$n_i$};
              \draw[dashed,blue] (-6,0) to (-5,-0.5) to (-3,-0.5) to (-2,0);
              \node [below=13pt] at (-4,0) {(a)};
              \draw (2,0)node[left=1pt]{$n_0$} -- (6,0)node[right=1pt]{$n_i$};
              \draw[dashed,blue] (2,0) to (2.8,-0.5) to (4.3,0.5) to (4.8,0) to (5.2,0) to (5.5,-0.5) to (6,0);
              \node [below=13pt] at (4,0) {(b)};
            \end{tikzpicture}
          \caption{Two hypothetical distinct routes to travel-forward from $n_0$ to $n_i$: (a) with only terminal nodes common, and (b) with more than two common nodes.}\label{fig:2}
        \end{figure}
	\end{proof}
\end{theorem}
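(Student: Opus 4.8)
The plan is to treat the two assertions—existence and uniqueness of $R_{0\rightarrow i}$—separately, in each case using only the two defining features of the graph (it is connected with no isolated node, and it has no closed loop) together with the adopted indexing convention and Definitions~\ref{definition:2_2}--\ref{definition:2_3}.

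For existence I would not be content with merely invoking ``no isolated node''; instead I would argue as follows. Since the graph is connected there is at least one walk joining $n_0$ to $n_i$, and deleting repetitions from it yields a simple path $n_0=n_{j_0},n_{j_1},\dots,n_{j_m}=n_i$. The indexing rule says precisely that, travelling along any such path away from $n_0$, the encountered indices strictly increase, so $j_0<j_1<\dots<j_m$. Hence for each consecutive pair $n_{j_\beta}$ is forward-adjacent to $n_{j_{\beta+1}}$, the whole path is therefore traversed forward, and it is a legitimate $R_{0\rightarrow i}$. (Equivalently one may induct on $i$: $n_i$ has a neighbour $n_k$ lying on a path from $n_0$, this neighbour has $k<i$, and one prepends the route $R_{0\rightarrow k}$—which exists by the inductive hypothesis—to the edge $n_k\to n_i$.)

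For uniqueness I would argue by contradiction, and this is the step I expect to carry the real weight. Suppose $R_{0\rightarrow i}$ and $R_{0\rightarrow i}'$ are two \emph{different} forward routes from $n_0$ to $n_i$. First note that a forward route is automatically a simple path, since its node indices strictly increase and so no node can recur. Now, reading both routes starting from $n_0$, let $n_a$ be the last node up to which the two routes coincide before first diverging, and let $n_b$ be the first node at which, continuing forward along $R_{0\rightarrow i}$, the route meets $R_{0\rightarrow i}'$ again (such a node exists because both routes end at the common node $n_i$). The segment of $R_{0\rightarrow i}$ from $n_a$ to $n_b$ and the segment of $R_{0\rightarrow i}'$ from $n_a$ to $n_b$ then share only their endpoints $n_a$ and $n_b$, yet they are distinct edge sequences, so their concatenation is a closed loop in the graph—contradicting the standing assumption of acyclicity. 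The sub-cases $n_a=n_0,\ n_b=n_i$ and the general case are exactly the two situations drawn in Fig.~\ref{fig:2}(a) and (b). Hence $R_{0\rightarrow i}$ is unique.

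The delicate point I would spell out most carefully is the construction of $n_a$ and $n_b$ and the verification that the resulting closed walk is a genuine cycle—that its vertices are pairwise distinct apart from the forced coincidence at $n_a$ and $n_b$, and that it uses at least one edge. Pinning this down is what converts the intuitive picture of Fig.~\ref{fig:2} into an actual contradiction with the no-closed-loop hypothesis; the remainder of the argument is routine bookkeeping.
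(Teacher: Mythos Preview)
Your proposal is correct and follows essentially the same route as the paper: uniqueness by contradiction, observing that two distinct forward routes from $n_0$ to $n_i$ would share at least the endpoints and hence enclose a cycle, contradicting acyclicity. Your version is simply more careful---you spell out the divergence/reunion nodes $n_a,n_b$ and give a genuine argument for existence via the indexing convention, whereas the paper dismisses existence as ``trivial'' from the absence of isolated nodes and leaves the cycle construction to Fig.~\ref{fig:2}.
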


\begin{corollary}\label{corollary:1_1}
	$R_{i\rightarrow j}$ is unique if it exists. \footnote[3]{$R_{0\rightarrow i}$ is unique does not mean that $R_{i\rightarrow j}$ is unique. Therefore it needs to be proved.}
	\begin{proof}
		Let $R_{i\rightarrow j}$ and $R_{i\rightarrow j}'$ be two distinct routes to travel-forward from $n_i$ to $n_j$. Therefore, there would exist two distinct routes to travel-forward from $n_0$ to $n_j$: (i) a combination of $R_{0\rightarrow i}$ and $R_{i\rightarrow j}$, (ii) a combination of $R_{0\rightarrow i}$ and $R_{i\rightarrow j}'$ which is clearly a contradiction to Theorem~\ref{theorem:1}. Hence, $R_{i\rightarrow j}$ is unique.
	\end{proof}
\end{corollary}
\begin{remark}
	It is important to note that $R_{i\rightarrow j}$ may not exist for an arbitrary pair of $n_i$ and $n_j$. For instance, $R_{i\rightarrow j}$ cannot exist if $j\leq i$. Further, it can be seen in Figure~\ref{fig:1} that $R_{4\rightarrow 9}$ does not exist. In this case, one has to travel $n_4\rightarrow n_3\rightarrow n_9$ which is not traveling forward. Further, if $n_i$ is forward adjacent to $n_j$, $R_{i\rightarrow j}$ trivially exists.
\end{remark}

\begin{corollary}\label{corollary:1_2}
	More than one node cannot be forward-adjacent to a particular node.
	\begin{proof}
		Let there be two nodes $n_{i_1}$ and $n_{i_2}$ such that both are forward-adjacent to $n_j$. This implies that both $R_{i_1\rightarrow j}$ and $R_{i_2\rightarrow j}$ exist. Now there are two distinct routes to travel forward from $n_0$ to $n_j$: (i) combination of $R_{0\rightarrow i_1}$ and $R_{i_1\rightarrow j}$ and (ii) combination of $R_{0\rightarrow i_2}$ and $R_{i_2\rightarrow j}$, thus making $R_{0\rightarrow j}$ non-unique which is contradiction to Theorem~\ref{theorem:1}.
	\end{proof}
\end{corollary}

\begin{definition}\label{definition:2_4}
	If node $n_k$ is encountered in route $R_{i\rightarrow j}$ (inclusive of the terminal nodes $n_i$ and $n_j$), we denote $n_k\in R_{i\rightarrow j}$.
\end{definition}

\begin{corollary}\label{corollary:1_3}
	$n_i$ is forward-connected to $n_j$ if and only if $n_i\in R_{0\rightarrow j}$.
\end{corollary}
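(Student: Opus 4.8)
The plan is to prove both directions by leaning on the uniqueness of $R_{0\rightarrow j}$ from Theorem~\ref{theorem:1}, together with one elementary observation: along any forward route the node indices are \emph{strictly increasing} (immediate from Definitions~\ref{definition:2_1} and~\ref{definition:2_3}). This single fact is what makes concatenation and restriction of routes behave well.

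For the ``only if'' direction, suppose $n_i$ is forward-connected to $n_j$, so that $R_{i\rightarrow j}$ exists; by Theorem~\ref{theorem:1}, $R_{0\rightarrow i}$ exists as well. I would concatenate $R_{0\rightarrow i}$ with $R_{i\rightarrow j}$: the first has all node indices $\leq i$ and increasing up to $i$, the second has all node indices $\geq i$ and increasing from $i$, so $n_i$ is their only common node and the concatenation is a genuine route that travels forward throughout, from $n_0$ to $n_j$. By the uniqueness asserted in Theorem~\ref{theorem:1}, this concatenation must be $R_{0\rightarrow j}$ itself; since $n_i$ lies on it, $n_i\in R_{0\rightarrow j}$.

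For the ``if'' direction, suppose $n_i\in R_{0\rightarrow j}$. Because $R_{0\rightarrow j}$ travels forward, the nodes on it are encountered in strictly increasing index order, so the portion of $R_{0\rightarrow j}$ running from $n_i$ onward to $n_j$ is itself a forward-traveling route. This exhibits $R_{i\rightarrow j}$, i.e.\ $n_i$ is forward-connected to $n_j$ (and by Corollary~\ref{corollary:1_1} the route so obtained is the unique $R_{i\rightarrow j}$). The degenerate cases $i=0$ and $i=j$ are trivially consistent with both sides of the equivalence.

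The only point needing care --- the ``main obstacle'', and a mild one --- is justifying that gluing $R_{0\rightarrow i}$ to $R_{i\rightarrow j}$ produces a \emph{simple} route and not merely a walk, and dually that a sub-segment of a forward route is again a forward route. Both follow at once from the strictly-increasing-index property of forward travel, which forbids any repeated node; everything else is bookkeeping.
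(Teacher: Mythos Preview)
Your proposal is correct and follows essentially the same approach as the paper: for necessity, concatenate $R_{0\rightarrow i}$ with $R_{i\rightarrow j}$ and invoke the uniqueness of $R_{0\rightarrow j}$ from Theorem~\ref{theorem:1}; for sufficiency, split $R_{0\rightarrow j}$ at $n_i$ to exhibit $R_{i\rightarrow j}$. The only difference is that you are more explicit than the paper in justifying why the concatenation is a genuine simple forward route (via the strictly-increasing-index observation), which is a welcome bit of additional rigor but not a different method.
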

\begin{proof}
	\emph{Necessity}:
	According to Theorem~\ref{theorem:1}, $R_{0\rightarrow i}$ exists. If $n_i$ is forward connected to $n_j$, $R_{i\rightarrow j}$ exists. $R_{0\rightarrow i}$ and $R_{i\rightarrow j}$ together construct a route to travel forward from  $n_0$ to $n_j$. According to Theorem~\ref{theorem:1}, this is the only route, i.e., $R_{0\rightarrow j}$ and clearly it contains node-$n_i$. Thus, $n_i\in R_{0\rightarrow j}$ is necessary.
	
	\emph{Sufficiency}: If $n_i\in R_{0\rightarrow j}$, $R_{0\rightarrow j}$ is a combination of $R_{0\rightarrow i}$ and $R_{i\rightarrow j}$. This means $R_{i\rightarrow j}$ exists and thus $n_i$ is forward connected to $n_j$.
\end{proof}
\begin{remark}
	This is a crucial result as it means if one node is forward connected to another, traveling forward from $n_0$ to the latter would require passing through the former. This property is ensured by the scheme of numbering of nodes and the acyclic nature of the graph (corresponding to radial connection of inductors in a circuit).
\end{remark}

\section{Properties of $\mathbf{A}$}\label{sec:prop_of_A}
\begin{definition}\label{definition:3_1}
	For a graph of $N$ nodes, $\mathbf{A}$ is an $N\times N$ matrix defined as:
	\begin{equation}\nonumber
	\mathbf{A}(i,j) = \left\{
	\begin{array}{ll}
	1 ~~~~~\textrm{if}~i=j\\
	-1 ~~\textrm{if}~ n_i ~\textrm{is forward-adjacent to}~ n_j\\
	0 ~~~~~\textrm{otherwise}
	\end{array}
	\right.
	\end{equation}
\end{definition}
This matrix naturally occurs if one writes the nodal equations of a circuit having radial topology using Kirchoff's current law. For ease of visualization, the graph shown in Fig.~\ref{fig:1} and corresponding $\mathbf{A}$ is shown side by side below. It can easily be observed in (\ref{eq:display_A}) that every column has one "1" and one "-1". Having only one "-1" is explained by Corollary~\ref{corollary:1_2}: more than one node cannot be forward adjacent to a particular node. However, if a column (say the $k$\textsuperscript{th} column) does not have any "-1" entry, it has to be concluded that $n_0$ is forward adjacent to $n_k$. Only the first column happens to be such a column in the considered graph.

\vspace{0.15in}

\begin{minipage}[r]{0.3\textwidth}
    \begin{tikzpicture}
    \draw (0,0)node[ocirc]{} to (0,-1)node[ocirc]{} to (0,-2) to (0,-3) to (0,-4)node[ocirc]{} to (0,-5)node[ocirc]{};
    \draw (0,-2) to (1,-2)node[ocirc]{};
    \draw (0,-2)node[ocirc]{} to (-1,-2)node[ocirc]{} to (-2,-2)node[ocirc]{};
    \draw (0,-3)node[ocirc]{} to (-1,-3) to (-2,-3)node[ocirc]{};
    \draw (-1,-3)node[ocirc]{} to (-1,-4)node[ocirc]{} to (-1,-5)node[ocirc]{};
    %%%%%%%%%%%%%%% naming nodes %%%%%%%%%%%%%%%%%%%%%%
    \node [right=1pt] at (0,0) {$n_0$};
    \node [right=1pt] at (0,-1) {$n_1$};
    \node [right=1pt] at (0,-2.2) {$n_2$};
    \node [right=1pt] at (0,-3) {$n_3$};
    \node [right=1pt] at (0,-4) {$n_4$};
    \node [right=1pt] at (0,-5) {$n_5$};
    \node [right=1pt] at (1,-2) {$n_6$};
    \node [above=1pt] at (-1,-2) {$n_7$};
    \node [left=1pt] at (-2,-2) {$n_8$};
    \node [above=1pt] at (-1,-3) {$n_9$};
    \node [left=1pt] at (-2,-3) {$n_{10}$};
    \node [left=1pt] at (-1,-4) {$n_{11}$};
    \node [left=1pt] at (-1,-5) {$n_{12}$};
    %%%%%%%%%%%%% drawing arrows %%%%%%%%%%%%%%%%%%%%%%%
    \filldraw[draw=black,fill=black] (0,-0.6) --+ (0.1,0.2) --+ (-0.1,0.2);
    \filldraw[draw=black,fill=black] (0,-1.6) --+ (0.1,0.2) --+ (-0.1,0.2);
    \filldraw[draw=black,fill=black] (0,-2.6) --+ (0.1,0.2) --+ (-0.1,0.2);
    \filldraw[draw=black,fill=black] (0,-3.6) --+ (0.1,0.2) --+ (-0.1,0.2);
    \filldraw[draw=black,fill=black] (0,-4.6) --+ (0.1,0.2) --+ (-0.1,0.2);
    \filldraw[draw=black,fill=black] (0.75,-2) --+ (-0.2,0.1) --+ (-0.2,-0.1);
    \filldraw[draw=black,fill=black] (-0.6,-2) --+ (0.2,0.1) --+ (0.2,-0.1);
    \filldraw[draw=black,fill=black] (-1.6,-2) --+ (0.2,0.1) --+ (0.2,-0.1);
    \filldraw[draw=black,fill=black] (-0.6,-3) --+ (0.2,0.1) --+ (0.2,-0.1);
    \filldraw[draw=black,fill=black] (-1.6,-3) --+ (0.2,0.1) --+ (0.2,-0.1);
    \filldraw[draw=black,fill=black] (-1,-3.6) --+ (0.1,0.2) --+ (-0.1,0.2);
    \filldraw[draw=black,fill=black] (-1,-4.6) --+ (0.1,0.2) --+ (-0.1,0.2);
    \end{tikzpicture}
\end{minipage}
\begin{minipage}[c]{0.65\textwidth}
     \begin{equation}\label{eq:display_A}
        \mathbf{A}=
        \begin{tikzpicture}[baseline=-0.8ex, font=\fontsize{0.105in}{0.1in}\selectfont, every left delimiter/.style={xshift=0.5em},every right delimiter/.style={xshift=-0.5em}]
        \matrix (m) [matrix of math nodes,left delimiter={[},right delimiter={]},row sep=0.7em,column sep=-0.3em,text height=0.5ex, text depth=0.2ex]{
            1 & -1 & 0 & 0 & 0 & 0 & 0 & 0 & 0 & 0 & 0 & 0\\[-5pt]
         	0 & 1 & -1 & 0 & 0 & -1 & -1 & 0 & 0 & 0 & 0 & 0\\[-5pt]
         	0 & 0 & 1 & -1 & 0 & 0 & 0 & 0 & -1 & 0 & 0 & 0\\[-5pt]
         	0 & 0 & 0 & 1 & -1 & 0 & 0 & 0 & 0 & 0 & 0 & 0\\[-5pt]
         	0 & 0 & 0 & 0 & 1 & 0 & 0 & 0 & 0 & 0 & 0 & 0\\[-5pt]
         	0 & 0 & 0 & 0 & 0 & 1 & 0 & 0 & 0 & 0 & 0 & 0\\[-5pt]
         	0 & 0 & 0 & 0 & 0 & 0 & 1 & -1 & 0 & 0 & 0 & 0\\[-5pt]
         	0 & 0 & 0 & 0 & 0 & 0 & 0 & 1 & 0 & 0 & 0 & 0\\[-5pt]
         	0 & 0 & 0 & 0 & 0 & 0 & 0 & 0 & 1 & -1 & -1 & 0\\[-5pt]
         	0 & 0 & 0 & 0 & 0 & 0 & 0 & 0 & 0 & 1 & 0 & 0\\[-5pt]
         	0 & 0 & 0 & 0 & 0 & 0 & 0 & 0 & 0 & 0 & 1 & -1\\[-5pt]
         	0 & 0 & 0 & 0 & 0 & 0 & 0 & 0 & 0 & 0 & 0 & 1\\[-5pt]
         };
        \end{tikzpicture}
     \end{equation}
\end{minipage}

\vspace{0.1in}

\begin{lemma}\label{lemma:2}
	$\mathbf{A}$ is upper-triangular.
	\begin{proof}
		According to Definition~\ref{definition:3_1}, "$1$" can occur only as a diagonal entry. So, lower-triangular entries are "$0$" or "$-1$". Let a lower-triangular entry be "$-1$", i.e., consider $\mathbf{A}(i,j)=-1$ where $i>j$. This would imply $n_i$ is forward adjacent to $n_j$ with $i>j$ which is a contradiction to Definition~\ref{definition:2_2}. Thus, $\mathbf{A}(i,j)=0$ if $i>j$.
	\end{proof}
\end{lemma}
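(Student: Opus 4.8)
The plan is to prove the statement directly from the definitions, without invoking any of the structural results about the graph: it suffices to show that $\mathbf{A}(i,j)=0$ for every pair of indices with $i>j$. First I would read off from Definition~\ref{definition:3_1} that the only entries of $\mathbf{A}$ that can fail to be $0$ are the diagonal entries, which equal $1$, and the entries $\mathbf{A}(i,j)$ for which $n_i$ is forward-adjacent to $n_j$, which equal $-1$. So to establish the claim it is enough to rule out both of these possibilities strictly below the diagonal.

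Now fix $i>j$. The diagonal case is immediately excluded, since it would require $i=j$. For the remaining case, suppose toward a contradiction that $\mathbf{A}(i,j)=-1$; then by Definition~\ref{definition:3_1} the node $n_i$ is forward-adjacent to $n_j$, and by Definition~\ref{definition:2_2} this forces $i<j$, contradicting the assumption $i>j$. Hence $\mathbf{A}(i,j)$ is neither $1$ nor $-1$, so it must be $0$, which is exactly the assertion that $\mathbf{A}$ is upper-triangular.

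I do not anticipate any real obstacle here: the statement is an essentially immediate consequence of the fact that the index inequality $i<j$ is built into the very definition of forward-adjacency. The only thing worth stressing is that this argument uses none of the material of Section~\ref{sec:the_graph} (connectedness, acyclicity, or the uniqueness of $R_{0\rightarrow i}$); it is purely a bookkeeping consequence of Definitions~\ref{definition:2_2} and~\ref{definition:3_1}. Those deeper structural facts will instead be the ones that do the work later, when one establishes that $\mathbf{A}$ is invertible and identifies the entries of $\mathbf{A}^{-1}$ with the forward-connectivity relation.
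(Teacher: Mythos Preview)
Your proposal is correct and follows essentially the same argument as the paper's own proof: both use Definition~\ref{definition:3_1} to reduce the question to ruling out a $-1$ entry below the diagonal, and then invoke the index inequality built into Definition~\ref{definition:2_2} to obtain a contradiction. Your additional remark that none of the structural graph results from Section~\ref{sec:the_graph} are needed here is accurate and worth keeping.
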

\begin{corollary}\label{corollary:2_1}
	$|\mathbf{A}|=1$.
	\begin{proof}
		$\mathbf{A}$ is upper-triangular. Its eigenvalues, i.e., the diagonal entries are all unity according to the Definition~\ref{definition:3_1}. Therefore, $|\mathbf{A}|$, which is the product of eigenvalues of $\mathbf{A}$, is also unity.
	\end{proof}
\end{corollary}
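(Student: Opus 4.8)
The plan is simply to combine the two structural facts about $\mathbf{A}$ that are already available. By Lemma~\ref{lemma:2}, $\mathbf{A}$ is upper-triangular, so $\mathbf{A}(i,j)=0$ whenever $i>j$; and by the first case of Definition~\ref{definition:3_1}, every diagonal entry is $\mathbf{A}(i,i)=1$. It then remains only to evaluate the determinant of an $N\times N$ upper-triangular matrix whose diagonal consists entirely of ones.

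For that evaluation I would avoid any appeal to eigenvalues (which for a general, possibly non-diagonalizable matrix would need extra justification) and instead give a short self-contained argument. One option is to write $\mathbf{A}=\mathbf{I}+\mathbf{U}$, where $\mathbf{U}(i,j)=\mathbf{A}(i,j)$ for $i<j$ and $\mathbf{U}(i,j)=0$ otherwise; then $\mathbf{U}$ is strictly upper-triangular, hence nilpotent, so $\mathbf{A}$ is unipotent and $|\mathbf{A}|=1$. An alternative more in keeping with the elementary tone of the paper is induction on $N$: a Laplace expansion of $|\mathbf{A}|$ along the first column leaves only the term $\mathbf{A}(1,1)=1$ times the determinant of the lower-right $(N-1)\times(N-1)$ block, which is itself upper-triangular with unit diagonal, and the base case $N=1$ gives $|\mathbf{A}|=1$ directly.

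Either route yields $|\mathbf{A}|=\prod_{i=1}^{N}\mathbf{A}(i,i)=1$. I do not anticipate a genuine obstacle here: all the substance lies in Lemma~\ref{lemma:2} and Definition~\ref{definition:3_1}, and the only point needing minor care is phrasing the triangular-determinant step without tacitly assuming diagonalizability of $\mathbf{A}$, which the nilpotent-perturbation or induction argument above handles cleanly. A useful by-product, which will matter in the sequel, is that $|\mathbf{A}|=1\neq 0$ forces $\mathbf{A}$ to be invertible, and in fact (by Cramer's rule, since $\mathbf{A}$ has integer entries and unit determinant) $\mathbf{A}^{-1}$ is again an integer matrix.
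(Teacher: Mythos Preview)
Your proposal is correct and follows essentially the same approach as the paper: invoke Lemma~\ref{lemma:2} for upper-triangularity and Definition~\ref{definition:3_1} for the unit diagonal, then conclude $|\mathbf{A}|=\prod_i \mathbf{A}(i,i)=1$. The only difference is that the paper phrases the last step via eigenvalues (the diagonal entries of a triangular matrix are its eigenvalues, and the determinant is their product), whereas you supply a self-contained induction or nilpotent-perturbation argument; your worry about diagonalizability is in fact unnecessary, since $\det(\mathbf{A})$ equals the product of eigenvalues counted with algebraic multiplicity for any matrix, diagonalizable or not.
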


\begin{lemma}\label{lemma:3}
	Each column of $\mathbf{A}$ has at most two non-zero entries: one is the diagonal entry which is unity while the other is $-1$.
	\begin{proof}
		Consider the $j$\textsuperscript{th} column. This column by definition contains $\mathbf{A}(j,j)=1$. Let there be two other non-zero entries, viz., $\mathbf{A}(i,j)=\mathbf{A}(i',j)=-1$ where $i,i'<j$. This means that there are two distinct nodes $n_i$ and $n_{i'}$ such that both are forward adjacent to $n_j$. This is a contradiction to Corollary~\ref{corollary:1_2} and hence inadmissible.
	\end{proof}
\end{lemma}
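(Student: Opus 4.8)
The plan is to argue column by column and translate everything into the statements already established about forward-adjacency. First I would fix an arbitrary index $j$ and look at the $j$-th column of $\mathbf{A}$. By Definition~\ref{definition:3_1} the diagonal entry satisfies $\mathbf{A}(j,j)=1$, so it is nonzero; this already pins down one of the two admissible nonzero entries and shows that whenever a second nonzero entry is present, the other is exactly the unit diagonal entry. What remains is to control the off-diagonal entries $\mathbf{A}(i,j)$ with $i\neq j$.

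Next I would observe that Definition~\ref{definition:3_1} leaves only two options for such an off-diagonal entry: either $\mathbf{A}(i,j)=0$, or $\mathbf{A}(i,j)=-1$, the latter occurring precisely when $n_i$ is forward-adjacent to $n_j$ (which in turn forces $i<j$, consistent with Lemma~\ref{lemma:2}). Consequently "column $j$ has at most two nonzero entries, the non-diagonal one being $-1$" is logically equivalent to the assertion that there is at most one index $i$ with $n_i$ forward-adjacent to $n_j$. At this point I would invoke Corollary~\ref{corollary:1_2} verbatim: it states exactly that no two distinct nodes can both be forward-adjacent to a single node $n_j$. Hence at most one off-diagonal entry of the column equals $-1$, and together with $\mathbf{A}(j,j)=1$ the column carries at most two nonzero entries of the claimed form.

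I do not expect a genuine obstacle here; the lemma is really just a bookkeeping restatement of Corollary~\ref{corollary:1_2} (and, behind it, the uniqueness of forward routes from Theorem~\ref{theorem:1}) in the matrix language of $\mathbf{A}$. The only points needing a moment's care are (i) checking via Definition~\ref{definition:3_1} that an off-diagonal entry can take no value other than $0$ or $-1$, so that "nonzero off-diagonal entry" is synonymous with "entry equal to $-1$", and (ii) noting that the diagonal entry is always present and equals exactly $1$, contributing one nonzero entry and nothing further. Given Corollary~\ref{corollary:1_2}, the argument is then immediate.
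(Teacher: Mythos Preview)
Your proposal is correct and follows essentially the same approach as the paper: fix the $j$\textsuperscript{th} column, note from Definition~\ref{definition:3_1} that $\mathbf{A}(j,j)=1$ and that any other nonzero entry must be a $-1$ coming from a forward-adjacent node, and then invoke Corollary~\ref{corollary:1_2} to rule out two such entries. The paper's version phrases this as a brief contradiction argument, while you spell out the equivalence a bit more explicitly, but the content is identical.
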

\begin{remark}
	A column of $\mathbf{A}$ may have only one non-zero entry corresponding to a node that is adjacent to $n_0$ as already discussed before.
\end{remark}

\begin{definition}\label{definition:3_2}
	The $j$\textsuperscript{th} column of a matrix $\mathbf{Q}$ is denoted by $C^{\mathbf{Q}}_j$. The $i$\textsuperscript{th} entry of this column is denoted as $C^{\mathbf{Q}}_j(i)$.
	\begin{remark}
		Observe that, $\mathbf{Q}(i,j)=C^{\mathbf{Q}}_j(i)$
	\end{remark}
\end{definition}
\begin{definition}\label{definition:3_3}
	$X_i^N$ is an $N$-dimensional column vector with "1" as its $i$\textsuperscript{th} entry and the rest are zero.
\end{definition}
\begin{remark}
	Definition~\ref{definition:3_3} allows succinctly writing the columns of $\mathbf{A}$. For instance, the $9$\textsuperscript{th} column of $\mathbf{A}$ shown in (\ref{eq:display_A}) can now be written as $X_9^{12}-X_3^{12}$. Similarly, every column of $\mathbf{A}$ can be written with two terms since, according to Lemma~\ref{lemma:3}, every column has at most two non-zero entries. For instance, if $n_i$ is forward adjacent to $n_j$, the $j$\textsuperscript{th} column of $\mathbf{A}$ should be $X_j^N-X_i^N$. However, if $n_0$ is forward-adjacent to $n_k$, $C_k^{\mathbf{A}}=X_k^N$. Therefore, $X_0^N$ is defined to be the $N$-dimensional null vector for consistency. (Note: $C_k^{\mathbf{A}}=X_k^N-X_{k'}^N \Rightarrow k'<k$)
\end{remark}

\vspace{0.1in}

Few important facts immediately follow the definition of $X_i^N$ since they are the standard basis vectors in linear algebra. These facts are mentioned below without providing proofs.
\begin{itemize}
	\item \emph{Fact-I}: $X_i^N=X_j^N$ if and only if $i=j$.
	\item \emph{Fact-II}: For $S=\{k_1, ~k_2, ~..., ~k_p\}$ where each $k_i$ is distinct, the set $S_X=\{X_{k_1}^N, ~X_{k_2}^N, ~..., ~X_{k_p}^N\}$ consists of $p$ linearly independent vectors.
	\begin{example}
		For instance, consider $S=\{1,2,4\}$ and $N=5$. So, $X^5_1=[1~0~0~0~0]^T$, $X^5_2=[0~1~0~0~0]^T$ and $X^5_4=[0~0~0~1~0]^T$ are all independent/distinct vectors.
	\end{example}
	\item \emph{Fact-III}: Let $S$ be a set of $p$ distinct positive integers $\{k_1, ~k_2, ~..., ~k_p\}$ and $S'$ be a set of $p$ positive integers $S'=\{k_1', ~k_2', ~..., ~k_p'\}$ where $k_i'$ are not necessarily distinct. If $\displaystyle \sum_{S}X_{k_i}^N=\sum_{S'}X_{k_i'}^N$, then $S=S'$. In other words, if the sum of $p$ distinct $X_i^N$ vectors is same as the sum of a different set of $p$ not necessarily distinct $X_i^N$ vectors, each vector of one set is equal to one vector of the other and thus, obviously, $p=q$ and the integers of the second set are distinct.
	\begin{example}
		Let, $S=\{1,2,4\}$, $S'=\{a,b,c\}$ and $N=5$. Now, if $X^5_1+X^5_2+X^5_4=[1~1~0~1~0]^T=X^5_a+X^5_b+X^5_c$, we must have $\{a,b,c\}=\{1,2,4\}$.
	\end{example}
\end{itemize}

\begin{theorem}\label{theorem:4}
	Node $n_i$ is forward-connected to node-$n_j$ if and only if there exists an additive combination of $k_1$\textsuperscript{th}, $k_2$\textsuperscript{th}, $\cdots$, $k_p$\textsuperscript{th} and $j$\textsuperscript{th} columns of $\mathbf{A}$ such that  $C^{\mathbf{A}}_{k_1}+C^{\mathbf{A}}_{k_2}+\cdots+C^{\mathbf{A}}_{k_p}+C^{\mathbf{A}}_j=X_j^N-X_i^N$ where $i<k_1<k_2<...<k_p<j$.
	\begin{proof}
		\emph{Necessity:} $n_i$ is forward connected to $n_j$. This means either $n_i$ is forward-adjacent to $n_j$, i.e., $C^{\mathbf{A}}_{j}=X_j^N-X_i^N$, in which case the condition is proved; or there exists a sequence of forward-connected nodes in between $n_i$ and $n_j$ that creates the route $R_{i\rightarrow j}$. For the latter case, nodes $n_{k_1},~n_{k_2},~...,~n_{k_p}$ exist with $i<k_1<k_2<...<k_p<j$ such that
		\begin{enumerate}
			\item $n_i$ is forward adjacent to $n_{k_1}$ $~\Rightarrow C^{\mathbf{A}}_{k_1}=X_{k_1}^N-X_{i}^N$
			\item each $n_{k_\beta}$ is forward-adjacent to $n_{k_{\beta+1}} ~\forall~ 1<\beta<p-1 ~~\Rightarrow C^{\mathbf{A}}_{k_{\beta+1}}=X_{k_{\beta+1}}^N-X_{k_{\beta}}^N$
			\item $n_{k_p}$ is forward adjacent to $n_j$ $~\Rightarrow C^{\mathbf{A}}_j=X_{j}^N-X_{k_p}^N$
		\end{enumerate}
	(Note: $k_1$, $k_2$, ..., $k_p$ are not necessarily consecutive integers.) Now, clearly the sum $C^{\mathbf{A}}_{k_1}+C^{\mathbf{A}}_{k_2}+\cdots+C^{\mathbf{A}}_{k_p}+C^{\mathbf{A}}_j$ is equal to $X_j^N-X_i^N$. Thus, the condition is necessary.
	
	\emph{Sufficiency:} There exists an additive combination $C^{\mathbf{A}}_{k_1}+C^{\mathbf{A}}_{k_2}+\cdots+C^{\mathbf{A}}_{k_p}+C^{\mathbf{A}}_j$ with $i<k_1<k_2<...<k_p<j$ such that the sum is $X_j^N-X_i^N$. It is required to prove that $n_i$ is forward connected to $n_j$. Let, $C^{\mathbf{A}}_{k_1}=X_{k_1}^N-X_{k_1'}^N$, $C^{\mathbf{A}}_{k_2}=X_{k_2}^N-X_{k_2'}^N$, ..., $C^{\mathbf{A}}_{k_p}=X_{k_p}^N-X_{k_p'}^N$, $C^{\mathbf{A}}_j=X_j^N-X_{j'}^N$ (this consideration is based on Lemma~\ref{lemma:3}). Now,
	\begin{eqnarray}\nonumber
	X_j^N-X_i^N &=& C^{\mathbf{A}}_{k_1}+C^{\mathbf{A}}_{k_2}+\cdots+C^{\mathbf{A}}_{k_p}+C^{\mathbf{A}}_j\\\nonumber
	&=& (X_{k_1}^N-X_{k_1'}^N)+(X_{k_2}^N-X_{k_2'}^N)+...+(X_{k_p}^N-X_{k_p'}^N)+(X_j^N-X_{j'}^N)\\\nonumber
	&=& (X_{k_1}^N+X_{k_2}^N+...+X_{k_p}^N)+X_j^N-(X_{k_1'}^N+X_{k_2'}^N+...+X_{k_p'}^N+X_{j'}^N)\\\label{eq_aux_2}
	\Rightarrow X_{k_1'}^N+X_{k_2'}^N+...+X_{k_p'}^N+X_{j'}^N &=& (X_{k_1}^N+X_{k_2}^N+...+X_{k_p}^N)+X_i^N
	\end{eqnarray}
	In (\ref{eq_aux_2}), it is easily seen that both sides contain $p+1$ vectors and vectors in the right hand side are mutually independent. Thus, by Fact-III, each column in the left hand side should be equal to one in the right \textcolor{blue}{side}.
	
	Now consider $X_{k_1'}$. $C_{k_1}^{\mathbf{A}}=X_{k_1}-X_{k_1'}$ and therefore $k_1'\neq k_1$. If $X_{k_1'}= X_{k_\beta}$ where $\beta>1$, this would imply $k_1'=k_\beta>k_1$ which is a contradiction since $k_1<k_2<...<k_p$. Therefore, we must have $X_{k_1'}=X_i^N$. Following the same logic, $X_{k_\beta'}=X_{k_{\beta-1}}^N ~\forall~\beta=2,...,p$. For the last element, $X_{j'}^N=X_{k_p}^N$.
	
	Therefore, $n_i$ is forward adjacent to $n_{k_1}$, $n_{k_\beta}$ is forward adjacent to $n_{k_{\beta+1}}~\forall~\beta=1,2,...,p-1$ and $n_{k_p}$ is forward adjacent to $n_j$. Thus, $n_i$ is forward connected to $n_j$.
	\end{proof}
\end{theorem}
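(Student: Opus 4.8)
The plan is to prove the two directions separately, exactly as the statement splits into "necessity" and "sufficiency," and to let the column-arithmetic of $\mathbf{A}$ do all the work so that no graph-theoretic reasoning beyond Corollaries~\ref{corollary:1_2} and~\ref{corollary:1_3} is needed. For \emph{necessity}, I would unfold Definition~\ref{definition:2_3}: if $n_i$ is forward-connected to $n_j$ then either $n_i$ is forward-adjacent to $n_j$, in which case $C^{\mathbf{A}}_j = X_j^N - X_i^N$ directly by Definition~\ref{definition:3_1} and we take the empty intermediate list; or there is a genuine route $R_{i\rightarrow j}$ with intermediate nodes $n_{k_1},\dots,n_{k_p}$ satisfying $i<k_1<\dots<k_p<j$ (the ordering coming from Definition~\ref{definition:2_2}), and each consecutive pair is forward-adjacent. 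Each such adjacency gives a column of $\mathbf{A}$ of the form $X_{k_{\beta+1}}^N - X_{k_\beta}^N$, with the two ends contributing $X_{k_1}^N - X_i^N$ and $X_j^N - X_{k_p}^N$. Summing these $p+1$ columns telescopes: every $X_{k_\beta}^N$ appears once with a $+$ and once with a $-$, leaving exactly $X_j^N - X_i^N$.

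For \emph{sufficiency}, suppose we are given indices $i<k_1<\dots<k_p<j$ with $C^{\mathbf{A}}_{k_1}+\cdots+C^{\mathbf{A}}_{k_p}+C^{\mathbf{A}}_j = X_j^N - X_i^N$. By Lemma~\ref{lemma:3} each summand is $X_m^N - X_{m'}^N$ for a unique $m'<m$ (with the convention $X_0^N=0$ handling the column whose predecessor is $n_0$). Rearranging the identity so that all negative terms move to one side yields
\begin{equation}\nonumber
X_{k_1'}^N + X_{k_2'}^N + \cdots + X_{k_p'}^N + X_{j'}^N \;=\; X_{k_1}^N + X_{k_2}^N + \cdots + X_{k_p}^N + X_i^N,
\end{equation}
an equality between a sum of $p+1$ vectors on each side, where the right-hand side is a sum of \emph{distinct} standard basis vectors (distinctness of $k_1,\dots,k_p$ is hypothesized, and $i<k_1$ keeps $i$ out of that list). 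Fact-III then forces the two multisets to coincide. The remaining step is to pin down \emph{which} vector on the left matches which on the right, and this is the one place where the ordering $i<k_1<\dots<k_p<j$ is essential: since $k_1'<k_1$ and $k_1'\notin\{k_2,\dots,k_p\}$ (each of those exceeds $k_1$), the only match available for $X_{k_1'}^N$ is $X_i^N$, so $k_1'=i$; inductively, $X_{k_\beta'}^N$ cannot equal any $X_{k_\gamma}^N$ with $\gamma\ge\beta$ and cannot equal $X_i^N$ (already taken), so it must match $X_{k_{\beta-1}}^N$, giving $k_\beta'=k_{\beta-1}$; finally $X_{j'}^N$ is left to match $X_{k_p}^N$. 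Translating back via Definition~\ref{definition:3_1}: $n_i$ is forward-adjacent to $n_{k_1}$, each $n_{k_\beta}$ is forward-adjacent to $n_{k_{\beta+1}}$, and $n_{k_p}$ is forward-adjacent to $n_j$, which is precisely a forward route $R_{i\rightarrow j}$, so $n_i$ is forward-connected to $n_j$.

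The main obstacle is the matching argument in the sufficiency direction: Fact-III gives that the multisets agree but not the \emph{bijection} respecting the column structure, and one must rule out "crossed" pairings (e.g.\ $X_{k_2'}^N = X_{k_p}^N$) that would be consistent as an equality of vectors but incompatible with $C^{\mathbf{A}}_{k_\beta}$ having the specific predecessor $k_\beta'$. The strict chain of inequalities $i<k_1<\dots<k_p<j$ together with $k_\beta'<k_\beta$ is exactly what collapses all such possibilities, and I would present this as a short induction on $\beta$ rather than a case analysis. A minor bookkeeping point to state explicitly is the degenerate case $p=0$, where the hypothesis reduces to $C^{\mathbf{A}}_j = X_j^N-X_i^N$ and the conclusion is immediate from the definition of forward-adjacency.
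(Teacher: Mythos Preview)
Your proposal is correct and follows essentially the same approach as the paper's own proof: the necessity direction is the telescoping sum along the route $R_{i\rightarrow j}$, and the sufficiency direction rearranges the column identity, invokes Fact-III to match the two multisets of standard basis vectors, and then uses the ordering $k_\beta'<k_\beta<k_{\beta+1}<\cdots$ to force $k_1'=i$, $k_\beta'=k_{\beta-1}$, and $j'=k_p$. Your explicit treatment of the degenerate case $p=0$ and the framing of the matching step as a short induction on $\beta$ are minor expository improvements over the paper, but the argument is the same.
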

\begin{remark}
	It can also be proved that this additive combination to express $X_j^N-X_i^N$ is unique and each participating column $C_{k_\beta}^{\mathbf{A}}$ represents a node $n_{k_\beta}$ such that $n_{k_\beta}\in R_{i\rightarrow j}$. However, these is not required for the present task and therefore their proofs are not discussed.
\end{remark}
\begin{example}
	An example is considered to explain Theorem~\ref{theorem:4}. Consider $n_2$ which is forward connected to $n_{11}$ in Fig.~\ref{fig:1} for instance. Now, there are columns $C_3^{\mathbf{A}}$ and $C_9^{\mathbf{A}}$ such that $C_3^{\mathbf{A}}+C_9^{\mathbf{A}}+C_{11}^{\mathbf{A}}=X_{11}^{12}-X_{2}^{12}$. Further, observe that $n_3$ and $n_9$ are encountered if one travels forward from $n_2$ to $n_{11}$.
\end{example}

\begin{corollary}\label{corollary:4_1}
	If $n_i$ is not forward connected to $n_j$, there exists an $n_{i'}$ where $i'<i$ such that  $C^{\mathbf{A}}_{k_1}+C^{\mathbf{A}}_{k_2}+\cdots+C^{\mathbf{A}}_{k_p}+C^{\mathbf{A}}_j=X_j^N-X_{i'}^N$ where $i<k_1<k_2<...<k_p<j$.
	\begin{proof}
		If $n_i$ is not forward connected to $n_j$, i.e., $n_i \notin R_{0\rightarrow j}$, there exists a series of forward adjacent nodes $n_{h_1}$, $n_{h_2}$, ..., $n_{h_q}$, $n_{k_1}$, $n_{k_2}$, ..., $n_{k_p}$ such that $0<h_1<h_2<...<h_q<i<k_1<k_2<...<k_p<j$ to construct the route  $R_{0\rightarrow j}$. So, there are columns $C^{\mathbf{A}}_{k_1}=X_{k_1}^{N}-X_{h_q}^{N}$, $C^{\mathbf{A}}_{k_2}=X_{k_2}^{N}-X_{k_1}^{N}$, ..., $C^{\mathbf{A}}_j=X_{j}^{N}-X_{k_p}^{N}$. Sum of these column is $C^{\mathbf{A}}_{k_1}+C^{\mathbf{A}}_{k_2}+\cdots+C^{\mathbf{A}}_{k_p}+C^{\mathbf{A}}_j=X_j^N-X_{h_q}^N$ where $h_q<i<k_1<k_2<...<k_p<j$. Obviously, here $h_q=i'$.
	\end{proof}
\end{corollary}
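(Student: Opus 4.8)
The plan is to reduce the claim to the structure of the unique forward route $R_{0\rightarrow j}$ --- which exists by Theorem~\ref{theorem:1} --- and then reuse the telescoping-of-columns idea from the necessity part of Theorem~\ref{theorem:4}, but applied to a pair of nodes other than $(n_i,n_j)$. First I would note that, by Corollary~\ref{corollary:1_3}, the hypothesis ``$n_i$ is not forward connected to $n_j$'' is equivalent to $n_i\notin R_{0\rightarrow j}$. Write $R_{0\rightarrow j}$ as a chain of forward-adjacent nodes $n_{m_0}=n_0,\,n_{m_1},\,\dots,\,n_{m_r}=n_j$ with $0=m_0<m_1<\dots<m_r=j$. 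Also $i\geq 1$, since $n_0$ lies on every route out of $n_0$, so if $i=0$ the hypothesis would already fail.

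Next I would locate the place on the route where the node index first rises above $i$. Let $n_{h_q}=n_{m_t}$ be the node of largest index among $n_{m_0},\dots,n_{m_{r-1}}$ whose index is strictly below $i$; this is well defined because $m_0=0<i$. The key observation is that the route successor of $n_{h_q}$, if it is not $n_j$ itself, already has index strictly greater than $i$: it cannot equal $i$ (as $n_i\notin R_{0\rightarrow j}$), and it cannot be less than $i$ without contradicting the maximality in the choice of $h_q$. Consequently the route nodes strictly between $n_{h_q}$ and $n_j$, listed in route order as $n_{k_1},\dots,n_{k_p}$ (with $p\geq 0$), satisfy $i<k_1<k_2<\dots<k_p<j$, and $n_{h_q},n_{k_1},\dots,n_{k_p},n_j$ are consecutive nodes of $R_{0\rightarrow j}$, hence each is forward-adjacent to the next.

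From here the computation is the same telescoping as in Theorem~\ref{theorem:4}: forward-adjacency gives $C^{\mathbf{A}}_{k_1}=X_{k_1}^N-X_{h_q}^N$, $C^{\mathbf{A}}_{k_{\beta+1}}=X_{k_{\beta+1}}^N-X_{k_{\beta}}^N$ for $1\leq\beta\leq p-1$, and $C^{\mathbf{A}}_j=X_j^N-X_{k_p}^N$, whose sum collapses to $C^{\mathbf{A}}_{k_1}+\dots+C^{\mathbf{A}}_{k_p}+C^{\mathbf{A}}_j=X_j^N-X_{h_q}^N$. Taking $i'=h_q$ then gives exactly the asserted identity, with $i'<i$ and $i<k_1<\dots<k_p<j$. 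When $p=0$ --- which is precisely the situation $j<i$, or $i$ sitting between $m_{r-1}$ and $j$ --- the list degenerates to $n_{h_q},n_j$ with $n_{h_q}$ forward-adjacent to $n_j$, and the identity reads $C^{\mathbf{A}}_j=X_j^N-X_{h_q}^N$, still with $h_q<i$.

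I expect the only subtle point to be the bookkeeping around $n_{h_q}$: one must pick the crossing node from $n_{m_0},\dots,n_{m_{r-1}}$ (excluding $n_j$) so that $n_j$ is never simultaneously base point and endpoint, and then verify the strict inequality $k_1>i$ rather than the merely obvious $k_1>h_q$ --- this is exactly where the hypothesis $n_i\notin R_{0\rightarrow j}$ gets used. Everything else is the routine telescoping identity. As an alternative route, once $n_{h_q}$ is identified one could instead observe that $n_{h_q}$ is forward-connected to $n_j$ by Corollary~\ref{corollary:1_3} and simply invoke Theorem~\ref{theorem:4}, at the cost of separately arguing that the additive combination it produces is the one induced by $R_{0\rightarrow j}$ and hence has $k_1>i$.
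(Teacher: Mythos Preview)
Your proposal is correct and follows essentially the same approach as the paper: split the unique route $R_{0\to j}$ at the last node whose index is below $i$, call that index $h_q=i'$, and telescope the columns corresponding to the remaining nodes to obtain $X_j^N-X_{i'}^N$. The only difference is expository---you justify explicitly why the split is clean (i.e.\ why $k_1>i$ rather than merely $k_1>h_q$) and you note the degenerate case $p=0$, whereas the paper simply asserts the partition $h_1<\dots<h_q<i<k_1<\dots<k_p$ without comment.
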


\section{Properties of $\mathbf{A}^{-1}$}\label{sec:prop_of_A-1}
Properties of $\mathbf{A}^{-1}$ that immediately follow from the definition of $\mathbf{A}$ are:
\begin{itemize}
	\item Since $\mathbf{A}$ is upper triangular, $\mathbf{A}^{-1}$ is also upper-triangular.
	\item Eigenvalues of $\mathbf{A}$ are all unity. Therefore, eigenvalues of $\mathbf{A}^{-1}$, i.e., the diagonal entries are also unity.
	\item Since $|\mathbf{A}|=1$,
	\begin{equation}\label{eq:A_cofactor}
	\mathbf{A}^{-1}=(\textrm{cofactor of}~\mathbf{A})^T
	\end{equation}
\end{itemize}

Now, let us focus on the super-diagonal entries of $\mathbf{A}^{-1}$, i.e., $\mathbf{A}^{-1}(i,j)$ where $i<j$. These are obtained from pertinent minors of $\mathbf{A}$.
\begin{definition}\label{definition:4_1}
	$\mathbf{M}_{i,j}$ denotes the matrix whose determinant yields the $(i,j)$\textsuperscript{th} minor of $\mathbf{A}$
\end{definition}
Following (\ref{eq:A_cofactor}), it is seen that
\begin{equation}\label{eq:A_minor}
\mathbf{A}^{-1}(i,j)=(-1)^{i+j}|\mathbf{M}_{j,i}|
\end{equation}
Matrix $\mathbf{M}_{j,i}$ is discussed next.

\section{The matrix $\mathbf{M}_{j,i}$}\label{sec:the_matrix_Mji}
Before proceeding further, we extend the definition of an upper-triangular matrix to rectangular matrices and even columns, as they would be extensively used hereafter.
\begin{definition}\label{definition:5_1}
	For a matrix $\mathbf{Q}_{m\times n}$, an entry $\mathbf{Q}(i,j)$ is (i) a diagonal entry if $i=j$, (ii) a sub-diagonal entry if $i=j+1$, (iii) an upper-triangular entry if $i\geq j$ or (iv) a lower-triangular entry if $i>j$.
\end{definition}
\begin{remark}
	Terms defined in Definition~\ref{definition:5_1} are well-known for square matrices. Their definitions are extended in this article to even include rectangular matrices, rows ($m=1$) and columns ($n=1$).
\end{remark}
\begin{definition}\label{definition:5_2}
	A matrix $\mathbf{Q}_{m\times n}$ is called upper-triangular if $\mathbf{Q}(i,j)=0~\forall~i>j$.
\end{definition}
\begin{remark}
	Observe that $\mathbf{Q}$ becomes simply a column if $n=1$. Thus Definition~\ref{definition:5_2} is applicable to columns as well. The $j$\textsuperscript{th} column of a matrix  $\mathbf{Q}$, i.e., $C_{j}^{\mathbf{Q}}$ is called upper-triangular if $\mathbf{Q}(i,j)=C_{j}^{\mathbf{Q}}(i)=0~\forall~i>j$.
\end{remark}

Critical properties of $\mathbf{M}_{j,i}$ are discussed in this section. For ease of understanding, a specific entry $\mathbf{M}_{8,4}$ is used as an example. Note that, according to (\ref{eq:A_minor}), $\mathbf{A}^{-1}(4,8)=|\mathbf{M}_{8,4}|$ where $\mathbf{M}_{8,4}$ is formed by deleting the shaded row and column, i.e., 8\textsuperscript{th} row and 4\textsuperscript{th} column of $\mathbf{A}$ as depicted in Fig.~\ref{fig:3}. This example represents deletion of the $j$\textsuperscript{th} row and $i$\textsuperscript{th} column of $\mathbf{A}$ in general.
\begin{figure}[h]
\centering
\begin{equation*}
\begin{tikzpicture}[baseline=-0.8ex, font=\fontsize{0.105in}{0.1in}\selectfont, every left delimiter/.style={xshift=0.5em},every right delimiter/.style={xshift=-0.5em}]
\matrix (m) [matrix of math nodes,left delimiter={[},right delimiter={]},row sep=0.7em,column sep=-0.3em,text height=0.5ex, text depth=0.2ex]{
    1 & -1 & 0 & 0 & 0 & 0 & 0 & 0 & 0 & 0 & 0 & 0\\[-5pt]
	0 & 1 & -1 & 0 & 0 & -1 & -1 & 0 & 0 & 0 & 0 & 0\\[-5pt]
	0 & 0 & 1 & -1 & 0 & 0 & 0 & 0 & -1 & 0 & 0 & 0\\[-5pt]
	0 & 0 & 0 & 1 & -1 & 0 & 0 & 0 & 0 & 0 & 0 & 0\\[-5pt]
	0 & 0 & 0 & 0 & 1 & 0 & 0 & 0 & 0 & 0 & 0 & 0\\[-5pt]
	0 & 0 & 0 & 0 & 0 & 1 & 0 & 0 & 0 & 0 & 0 & 0\\[-5pt]
	0 & 0 & 0 & 0 & 0 & 0 & 1 & -1 & 0 & 0 & 0 & 0\\[-5pt]
	0 & 0 & 0 & 0 & 0 & 0 & 0 & 1 & 0 & 0 & 0 & 0\\[-5pt]
	0 & 0 & 0 & 0 & 0 & 0 & 0 & 0 & 1 & -1 & -1 & 0\\[-5pt]
	0 & 0 & 0 & 0 & 0 & 0 & 0 & 0 & 0 & 1 & 0 & 0\\[-5pt]
	0 & 0 & 0 & 0 & 0 & 0 & 0 & 0 & 0 & 0 & 1 & -1\\[-5pt]
	0 & 0 & 0 & 0 & 0 & 0 & 0 & 0 & 0 & 0 & 0 & 1\\[-5pt]
};
\begin{pgfonlayer}{background}
\fill [Gray] (-1.6,2.6) rectangle (-1.15,-2.5);
\fill [Gray] (-2.95,-0.31) rectangle (2.9,-0.75);
\end{pgfonlayer}
\end{tikzpicture}
\xrightarrow[\text{$j$\textsuperscript{th} row deleted}]{\text{$i$\textsuperscript{th} column deleted}}
\begin{tikzpicture}[baseline=-0.8ex, font=\fontsize{0.105in}{0.1in}\selectfont, every left delimiter/.style={xshift=0.5em},every right delimiter/.style={xshift=-0.5em}]
\matrix (m) [matrix of math nodes,left delimiter={[},right delimiter={]},row sep=0.7em,column sep=-0.3em,text height=0.5ex, text depth=0.2ex]{
	1 & -1 & 0 & 0 & 0 & 0 & 0 & 0 & 0 & 0 & 0\\[-5pt]
	0 & 1 & -1 & 0 & -1 & -1 & 0 & 0 & 0 & 0 & 0\\[-5pt]
	0 & 0 & 1 & 0 & 0 & 0 & 0 & -1 & 0 & 0 & 0\\[-5pt]
	0 & 0 & 0 & -1 & 0 & 0 & 0 & 0 & 0 & 0 & 0\\[-5pt]
	0 & 0 & 0 & 1 & 0 & 0 & 0 & 0 & 0 & 0 & 0\\[-5pt]
	0 & 0 & 0 & 0 & 1 & 0 & 0 & 0 & 0 & 0 & 0\\[-5pt]
	0 & 0 & 0 & 0 & 0 & 1 & -1 & 0 & 0 & 0 & 0\\[-5pt]
	0 & 0 & 0 & 0 & 0 & 0 & 0 & 1 & -1 & -1 & 0\\[-5pt]
	0 & 0 & 0 & 0 & 0 & 0 & 0 & 0 & 1 & 0 & 0\\[-5pt]
	0 & 0 & 0 & 0 & 0 & 0 & 0 & 0 & 0 & 1 & -1\\[-5pt]
	0 & 0 & 0 & 0 & 0 & 0 & 0 & 0 & 0 & 0 & 1\\[-5pt]
};
\draw[dashed] (-1.4,-2.15) to (-1.4,2.3);
\draw[dashed] (0.65,-2.15) to (0.65,2.3);
\draw[dashed] (-2.7,-0.53) to (2.7,-0.53);
\begin{pgfonlayer}{background}
\fill [Gray] (-2.7,2.3) rectangle (-2.35,1.89);
\fill [Gray] (-2.35,1.89) rectangle (-1.9,1.48);
\fill [Gray] (-1.9,1.48) rectangle (-1.45,1.07);
\fill [Gray] (-1.35,1.07) rectangle (-0.9,0.66);
\fill [Gray] (-0.9,0.66) rectangle (-0.4,0.25);
\fill [Gray] (-0.4,0.25) rectangle (0.1,-0.16);
\fill [Gray] (0.1,-0.16) rectangle (0.6,-0.48);
\fill [Gray] (0.7,-0.57) rectangle (1.2,-0.98);
\fill [Gray] (1.2,-0.98) rectangle (1.7,-1.39);
\fill [Gray] (1.7,-1.39) rectangle (2.2,-1.8);
\fill [Gray] (2.2,-1.8) rectangle (2.7,-2.21);
\end{pgfonlayer}
\draw[->,shorten <=1pt] (-1.35,2.5) -- (2,2.5)node[right] {$y\geq i$};
\draw[->,shorten <=1pt] (-1.45,2.5) -- (-2,2.5)node[left] {$y< i$};
\draw[->,shorten <=1pt] (0.7,-2.4) -- (1.9,-2.4)node[right] {$y\geq j$};
\draw[->,shorten <=1pt] (2.95,-0.48) -- (2.95,2.2)node[right] {$x<j$};
\draw[->,shorten <=1pt] (2.95,-0.58) -- (2.95,-2.15)node[right] {$x\geq j$};
\end{tikzpicture}
\end{equation*}
\caption{\label{fig:3} \emph{Depiction of forming $\mathbf{M}_{j,i}$ from $\mathbf{A}$}: The shaded row and column of $\mathbf{A}$ (on the left) are the $j$\textsuperscript{th} row and the $i$\textsuperscript{th} column respectively. Diagonal entries of the resulting $\mathbf{M}_{j,i}$ (on the right) are highlighted.}
\end{figure}

A careful inspection of Fig.~\ref{fig:3} reveals that entries of $\mathbf{M}_{j,i}(x,y)$ can be written as
\begin{eqnarray}\label{eq_2_lemma_row_column_deletion_1}
\mathbf{M}_{j,i}(x,y) &=& \mathbf{A}(x,y)~\textrm{if}~x<j,y<i \\\label{eq_2_lemma_row_column_deletion_2}
\mathbf{M}_{j,i}(x,y) &=& \mathbf{A}(x,y+1)~\textrm{if}~x<j,y\geq i\\\label{eq_2_lemma_row_column_deletion_3}
\mathbf{M}_{j,i}(x,y) &=& \mathbf{A}(x+1,y)~\textrm{if}~x\geq j,y<i~~( =0 ~\textrm{as}~x>y)\\\label{eq_2_lemma_row_column_deletion_4}
\mathbf{M}_{j,i}(x,y) &=& \mathbf{A}(x+1,y+1)~\textrm{if}~x\geq j,y\geq i
\end{eqnarray}

It can be seen in Fig.~\ref{fig:3} that
\begin{itemize}
  \item First three columns (before the $i$\textsuperscript{th} column) and last four columns (after the $j$\textsuperscript{th} column) are upper-triangular with diagonal entries as unity.
  \item As $C_4^{\mathbf{A}}$ is deleted, the following columns move towards left. Consequently, the diagonal entries of $C_5^{\mathbf{A}}$, $C_6^{\mathbf{A}}$, and $C_7^{\mathbf{A}}$ become sub-diagonal entries in $C_4^{\mathbf{M}_{8,4}}$, $C_5^{\mathbf{M}_{8,4}}$, and $C_6^{\mathbf{M}_{8,4}}$. These columns have lost their upper-diagonal property.
  \item $C_7^{\mathbf{M}_{8,4}}$ is a unique column in $\mathbf{M}_{8,4}$. This column does not have a unity entry as it got deleted with the deletion of the $8$\textsuperscript{th} column of $\mathbf{A}$. It may also be noted that this column is upper-triangular.
\end{itemize}
Before proceeding further, let us put our objective in perspective. We are interested in the structure of $\mathbf{A}^{-1}$ and that, according to \eqref{eq:A_minor}, is fully understood from the knowledge of a general $|\mathbf{M}_{j,i}|$. Finding the determinant of $\mathbf{M}_{j,i}$, however, becomes tricky for a general case because $\mathbf{M}_{j,i}$ is not an upper-triangular matrix as is evident from Fig.~\ref{fig:3}. Inspection of Fig.~\ref{fig:3} reveals that not all the columns are responsible for destroying the upper-triangular nature and those troublesome columns are present in the middle. The aforementioned properties of the columns are carefully listed. One can identify some more interesting facts in this specific example, but they would not be necessarily true for any arbitrary graph while the listed ones are. Hereafter we shall prove these properties from the definition of $\mathbf{A}$ and $\mathbf{M}_{j,i}$. This is necessary as these properties are going to be made use of to find $|\mathbf{M}_{j,i}|$ for a generic case. Each of the following subsections contains its summary at the end.

\subsection{Columns on either side}\label{subsec:columns_side}
\begin{lemma}\label{lemma:5}
	If $\mathbf{A}$ is of dimension $N\times N$ and $i<j$, first $i-1$ and last $N-j$ columns of $\mathbf{M}_{j,i}$, i.e., $C_k^{\mathbf{M}_{j,i}}~ \forall k<i, k\geq j$ are upper-triangular with the diagonal entries as unity.
	\begin{proof}
		\emph{First $i-1$ columns of $\mathbf{M}_{j,i}$}: Consider $\mathbf{M}_{j,i}(x,y)$ with $x\leq y$ as an arbitrary upper-triangular entry of these columns. Now,
		\begin{eqnarray}
		\mathbf{M}_{j,i}(x,y)~\textrm{for}~x\leq y, ~y<i ~~~\textrm{[$y<i$ since we are considering first $i-1$ columns]}\\\nonumber
		= \mathbf{M}_{j,i}(x,y)~\textrm{for}~x\leq y<i<j ~~~\textrm{[since $\mathbf{M}_{j,i}$ is defined for $i<j$]}\\\nonumber
		= \mathbf{A}(x,y) ~~~\textrm{[according to (\ref{eq_2_lemma_row_column_deletion_1})]}~~~~~~~~~~~~~~~~~~~~~~~~~~~~~~~~~~~~~~~~~~~
		\end{eqnarray}
		Now, consider an arbitrary lower-triangular entry in these columns of $\mathbf{M}_{j,i}$, i.e., $\mathbf{M}_{j,i}(x,y)$ with $x>y$. According to (\ref{eq_2_lemma_row_column_deletion_1}) and ((\ref{eq_2_lemma_row_column_deletion_3})), these entries can be written as
		\begin{equation}
		\mathbf{M}_{j,i}(x,y)~\textrm{for}~x>y, ~y<i<j=
		\left\{
		\begin{array}{ll}
		[\mathbf{A}(x,y)] \textrm{~for~} y<x<j ~~\textrm{[according to (\ref{eq_2_lemma_row_column_deletion_1})]}=0 ~~\textrm{[since $x>y$]}\\[2pt]
		0 ~\textrm{for}~x\geq j, ~y<i<j ~~\textrm{[according to (\ref{eq_2_lemma_row_column_deletion_3})]}
		\end{array}
		\right.
		\end{equation}
		Therefore, the upper-triangular entries of the first $i-1$ columns of $\mathbf{M}_{j,i}$ is exactly the same as that of [$\mathbf{A}$] and the lower-triangular entries are zero. Hence, the first $i-1$ columns are upper-triangular with diagonal entries unity.
		
		\emph{Last $N-j$ columns of $\mathbf{M}_{j,i}$}: Now, focus on the last $N-j$ columns. An arbitrary lower-triangular entry in these columns is $\mathbf{M}_{j,i}(x,y)$ with $x>y$. Now,
		\begin{eqnarray}\nonumber
		\mathbf{M}_{j,i}(x,y)~\textrm{for}~x>y\geq j ~~~\textrm{[$y\geq j$ since we are considering last  $N-j$ columns]}\\
		= \mathbf{A}(x+1,y+1) [\textrm{according to}~(\ref{eq_2_lemma_row_column_deletion_4})]	~~= 0 ~\textrm{[since $x+1>y+1$]}
		\end{eqnarray}
		Now, consider a diagonal entry in these columns of $\mathbf{M}_{j,i}$, i.e., $\mathbf{M}_{j,i}(x,y)$ with $x=y\geq j$.
		\begin{equation}
		\mathbf{M}_{j,i}(x,y)~\textrm{for}~x=y\geq j = \mathbf{A}(x+1,y+1) ~~\textrm{[according to  (\ref{eq_2_lemma_row_column_deletion_4})]}=1
		\end{equation}
		Therefore, the lower-triangular entries of the last $N-j$ columns of $\mathbf{M}_{j,i}$ are zero, i.e., these columns are upper-triangular and diagonal entries are unity.
	\end{proof}
\end{lemma}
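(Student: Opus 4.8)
The plan is to bypass the matrix $\mathbf{M}_{j,i}$ as an object and argue purely at the level of entries, using the four case formulas \eqref{eq_2_lemma_row_column_deletion_1}--\eqref{eq_2_lemma_row_column_deletion_4} that express $\mathbf{M}_{j,i}(x,y)$ in terms of entries of $\mathbf{A}$. The only structural input needed is that $\mathbf{A}$ is upper-triangular (Lemma~\ref{lemma:2}) with all diagonal entries equal to $1$ (Definition~\ref{definition:3_1}); no further graph-theoretic facts are required. Since $\mathbf{M}_{j,i}$ is only defined for $i<j$, I would record at the outset that any column index $y<i$ also satisfies $y<j$, a small observation that gets used repeatedly.

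First I would treat the first $i-1$ columns, i.e.\ $C_y^{\mathbf{M}_{j,i}}$ with $y<i$. For an entry on or above the diagonal, $x\le y<i<j$, so \eqref{eq_2_lemma_row_column_deletion_1} gives $\mathbf{M}_{j,i}(x,y)=\mathbf{A}(x,y)$; in particular the diagonal entry ($x=y$) equals $\mathbf{A}(y,y)=1$. For an entry strictly below the diagonal, $x>y$, I would split on whether $x<j$ or $x\ge j$: in the former case \eqref{eq_2_lemma_row_column_deletion_1} again gives $\mathbf{A}(x,y)$, which vanishes since $x>y$ and $\mathbf{A}$ is upper-triangular; in the latter case \eqref{eq_2_lemma_row_column_deletion_3} gives $\mathbf{A}(x+1,y)$, which vanishes because $x+1>x\ge j>i>y$. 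Hence every such column is upper-triangular with diagonal entry $1$.

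Next I would treat the last $N-j$ columns, i.e.\ $C_y^{\mathbf{M}_{j,i}}$ with $y\ge j$. For a strictly-below-diagonal entry, $x>y\ge j$, so in particular $x\ge j$ and $y\ge j$, and \eqref{eq_2_lemma_row_column_deletion_4} gives $\mathbf{M}_{j,i}(x,y)=\mathbf{A}(x+1,y+1)=0$ since $x+1>y+1$ and $\mathbf{A}$ is upper-triangular. For the diagonal entry, $x=y\ge j$, the same formula gives $\mathbf{A}(x+1,x+1)=1$. Thus these columns too are upper-triangular with diagonal entry $1$, which finishes the lemma.

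I do not expect a genuine obstacle: the argument is pure bookkeeping. The only place where one must be slightly careful is selecting the correct branch among \eqref{eq_2_lemma_row_column_deletion_1}--\eqref{eq_2_lemma_row_column_deletion_4} for each region of $(x,y)$, and checking that the index shifts $x\mapsto x+1$ and $y\mapsto y+1$ preserve the strict inequality $x>y$. It is also worth flagging that the hypothesis $i<j$ enters exactly once in spirit --- to guarantee $y<j$ whenever $y<i$, so that the ``$x<j$'' branch is available throughout the left block --- and plays no other role.
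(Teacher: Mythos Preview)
Your proposal is correct and follows essentially the same approach as the paper: both argue entry-by-entry using the four case formulas \eqref{eq_2_lemma_row_column_deletion_1}--\eqref{eq_2_lemma_row_column_deletion_4}, split the first $i-1$ columns into the on/above-diagonal region (where \eqref{eq_2_lemma_row_column_deletion_1} applies) and the below-diagonal region (split further by $x<j$ versus $x\ge j$), and handle the last $N-j$ columns via \eqref{eq_2_lemma_row_column_deletion_4} alone. The only cosmetic difference is that for the case $x\ge j$, $y<i$ you explicitly write $\mathbf{A}(x+1,y)$ and then argue it vanishes, whereas the paper simply quotes the parenthetical ``$=0$'' already built into \eqref{eq_2_lemma_row_column_deletion_3}.
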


\emph{\textbf{Summary}}: The side-columns are all upper-triangular with diagonal entries as unity. Therefore, we do not need to focus on these columns while determining $|\mathbf{M}_{j,i}|$.

\subsection{Columns in the middle}\label{subsec:columns_middle}
Crucial properties of columns of $\mathbf{M}_{j,i}$ enclosed by the two vertical dashed lines in Fig.~\ref{fig:3} are stated and proved in the following two theorems. Their significance is briefly mentioned in the summary of this subsection. The last column within the dashed line, i.e. $C^{\mathbf{M}_{j,i}}_{j-1}$ for a general graph, is not excluded here and discussed in the next subsection. "\emph{The middle-columns}" shall refer to all the rest of the columns within the dashed lines.
\begin{lemma}\label{lemma:6}
	Sub-diagonal entries of $C_k^{\mathbf{M}_{j,i}}$, i.e., $\mathbf{M}_{j,i}(k+1,k)$ where $i\leq k<j-1$ are unity. Other lower-triangular entries in these columns are zero.
	\begin{proof}
		Consider the sub-diagonal entries first.
		\begin{eqnarray}\nonumber
		\mathbf{M}_{j,i}(k+1,k)~ \forall i\leq k<j-1 &=& \mathbf{M}_{j,i}(k+1,k)~\forall k+1<j,~k\geq i\\
		&=&\mathbf{A}(k+1,k+1)~~\textrm{[according to (\ref{eq_2_lemma_row_column_deletion_2})]} ~= 1
		\end{eqnarray}
		Now  consider the other lower-triangular entries, i.e., $\mathbf{M}_{j,i}(x,k)~\forall x>k+1$.
		\begin{eqnarray}\nonumber
		\mathbf{M}_{j,i}(x,k)~ \forall x>k+1, ~i\leq k<j-1 &=& \mathbf{M}_{j,i}(x,k)~\forall x<j,~y\geq i=\mathbf{A}(x,y+1) [\textrm{according to}~(\ref{eq_2_lemma_row_column_deletion_2})]=0\\
		&=&\mathbf{M}_{j,i}(x,y)~\forall x\geq j,~y\geq i=0 ~~[\textrm{according to}~(\ref{eq_2_lemma_row_column_deletion_4})]
		\end{eqnarray}
	\end{proof}
\end{lemma}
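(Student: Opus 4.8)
The plan is to prove the lemma by a direct, entry-by-entry computation: read off each relevant entry of $\mathbf{M}_{j,i}$ from the four-case description \eqref{eq_2_lemma_row_column_deletion_1}--\eqref{eq_2_lemma_row_column_deletion_4}, and then invoke the two facts about $\mathbf{A}$ already in hand --- that $\mathbf{A}$ is upper-triangular (Lemma~\ref{lemma:2}) and that every diagonal entry of $\mathbf{A}$ equals $1$ (Definition~\ref{definition:3_1}, Corollary~\ref{corollary:2_1}). The statement has two parts, and each reduces to deciding which of the four cases the index pair in question belongs to.

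For the sub-diagonal entry, I would consider $\mathbf{M}_{j,i}(k+1,k)$ with $i\leq k<j-1$. Since $k\geq i$ the column index lies in the ``$y\geq i$'' regime, and since $k<j-1$ we have $x=k+1<j$; hence \eqref{eq_2_lemma_row_column_deletion_2} applies and gives $\mathbf{M}_{j,i}(k+1,k)=\mathbf{A}(k+1,k+1)=1$, the last equality because the diagonal of $\mathbf{A}$ consists of ones. For the entries strictly below the sub-diagonal, I would fix $x>k+1$ and $y=k$ with $i\leq k<j-1$, so again $y\geq i$, and split on the size of $x$: if $x<j$, then \eqref{eq_2_lemma_row_column_deletion_2} gives $\mathbf{M}_{j,i}(x,k)=\mathbf{A}(x,k+1)$, which vanishes by Lemma~\ref{lemma:2} since $x>k+1$; if $x\geq j$, then \eqref{eq_2_lemma_row_column_deletion_4} gives $\mathbf{M}_{j,i}(x,k)=\mathbf{A}(x+1,k+1)$, which again vanishes by Lemma~\ref{lemma:2} since $x>k+1$ forces $x+1>k+1$. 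Both branches yield $0$, which completes the argument.

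I do not expect a substantive obstacle; the only point requiring care is the index bookkeeping. The bound $k<j-1$ (rather than $k<j$) is exactly what guarantees $k+1<j$, so that the sub-diagonal position survives in the ``$x<j$'' block instead of landing on the deleted $j$-th row; and the hypothesis $x>k+1$ is precisely strong enough to keep both $\mathbf{A}(x,k+1)$ and $\mathbf{A}(x+1,k+1)$ strictly below the diagonal of $\mathbf{A}$, so that the two ranges of $x$ can be handled uniformly. It is worth noting that the diagonal entry of each of these middle columns, and the entries above it, are deliberately outside the scope of this lemma --- they are precisely what prevent these columns from being upper-triangular --- and are treated in the subsequent results.
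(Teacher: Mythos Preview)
Your proposal is correct and follows essentially the same route as the paper: identify which of the four index regimes \eqref{eq_2_lemma_row_column_deletion_1}--\eqref{eq_2_lemma_row_column_deletion_4} each entry falls into, apply the corresponding formula, and then use the upper-triangularity and unit diagonal of $\mathbf{A}$ to read off the value. If anything, your write-up is slightly more explicit than the paper's in spelling out why $\mathbf{A}(x,k+1)$ and $\mathbf{A}(x+1,k+1)$ vanish (you name Lemma~\ref{lemma:2} rather than leaving the ``$=0$'' implicit), but the underlying argument is identical.
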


\begin{theorem}\label{theorem:7}
	If $C_k^{\mathbf{A}}=X_k^N-X_{k'}^N$ where $i<k<j$, $C_{k-1}^{\mathbf{M}_{j,i}}=X_k^{N-1}-X_{k'}^{N-1}$.
	\begin{proof}
		Each $(k-1)$\textsuperscript{th} column of $\mathbf{M}_{j,i}$ can be subdivided into two parts: (i) first $j-1$ entries, i.e., $C_{k-1}^{\mathbf{M}_{j,i}}(x)~\forall x<j$ and (ii) the rest. For the first $j-1$ entries
		\begin{eqnarray}\nonumber
		C_{k-1}^{\mathbf{M}_{j,i}}(x)=\mathbf{M}_{j,i}(x,k-1) &=& \mathbf{A}(x,k) ~\forall~x<j~~~~[\textrm{according to (\ref{eq_2_lemma_row_column_deletion_2})}]\\\label{eq_19}
		&=& C_k^{\mathbf{A}}(x) \forall x<j, i<k<j
		\end{eqnarray}
		For rest of the entries
		\begin{equation}
		C_{k-1}^{\mathbf{M}_{j,i}}(x)=\mathbf{A}(x+1,k) ~\forall~x\geq j~~[\textrm{according to (\ref{eq_2_lemma_row_column_deletion_4})}]=0~\textrm{[since $x+1>j>k$]}
		\end{equation}
		The upper-triangular entries of $C_k^{\mathbf{A}}$ are all present in the first $j-1$ entries since $k<j$. According to (\ref{eq_19}), they are all present in $C_{k-1}^{\mathbf{M}_{j,i}}$ in the same row position. Thus the non zero entries of $C_{k-1}^{\mathbf{M}_{j,i}}$ are exactly the same (and also in the same position) as $C_k^{\mathbf{A}}$. Thus, $C_{k-1}^{\mathbf{M}_{j,i}}=X_k^{N-1}-X_{k'}^{N-1}$ if $C_k^{\mathbf{A}}=X_k^N-X_{k'}^N$ where $i<k<j$.
	\end{proof}
\end{theorem}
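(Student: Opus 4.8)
The plan is to read off the entries of $C_{k-1}^{\mathbf{M}_{j,i}}$ directly from the block description of $\mathbf{M}_{j,i}$ recorded in \eqref{eq_2_lemma_row_column_deletion_1}--\eqref{eq_2_lemma_row_column_deletion_4} and compare them entry-by-entry with those of $C_k^{\mathbf{A}}$. The bookkeeping observation that makes this painless is that, since $i<k<j$, the column index $k-1$ of $\mathbf{M}_{j,i}$ satisfies $k-1\geq i$; hence every entry of $C_{k-1}^{\mathbf{M}_{j,i}}$ is governed by \eqref{eq_2_lemma_row_column_deletion_2} (when the row index is $<j$) or by \eqref{eq_2_lemma_row_column_deletion_4} (when it is $\geq j$), never by \eqref{eq_2_lemma_row_column_deletion_1} or \eqref{eq_2_lemma_row_column_deletion_3}.

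First I would split the column $C_{k-1}^{\mathbf{M}_{j,i}}$ into its top part, rows $x<j$, and its bottom part, rows $x\geq j$. For $x<j$, formula \eqref{eq_2_lemma_row_column_deletion_2} gives $C_{k-1}^{\mathbf{M}_{j,i}}(x)=\mathbf{M}_{j,i}(x,k-1)=\mathbf{A}(x,k)=C_k^{\mathbf{A}}(x)$, so the first $j-1$ entries of $C_{k-1}^{\mathbf{M}_{j,i}}$ literally reproduce the first $j-1$ entries of $C_k^{\mathbf{A}}$ in the same row positions. For $x\geq j$, formula \eqref{eq_2_lemma_row_column_deletion_4} gives $C_{k-1}^{\mathbf{M}_{j,i}}(x)=\mathbf{A}(x+1,k)$; since $x\geq j>k$ we have $x+1>k$, and $\mathbf{A}$ is upper-triangular by Lemma~\ref{lemma:2}, so this entry vanishes. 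Thus the bottom part of $C_{k-1}^{\mathbf{M}_{j,i}}$ is identically zero.

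Next I would invoke the hypothesis $C_k^{\mathbf{A}}=X_k^N-X_{k'}^N$ together with the standing fact $k'<k$ (noted in the remark following Definition~\ref{definition:3_3}). The only two nonzero entries of $C_k^{\mathbf{A}}$ -- the $+1$ in row $k$ and the $-1$ in row $k'$ -- therefore lie in rows $\leq k<j$, i.e.\ among the first $j-1$ rows that survive the deletion of row $j$. By the previous paragraph these two entries are carried over unchanged to rows $k$ and $k'$ of $C_{k-1}^{\mathbf{M}_{j,i}}$, and no other entry of that column is nonzero. Since $C_{k-1}^{\mathbf{M}_{j,i}}$ has $N-1$ components, this is exactly the statement $C_{k-1}^{\mathbf{M}_{j,i}}=X_k^{N-1}-X_{k'}^{N-1}$.

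I do not expect any genuinely hard step; the only point requiring care is the index arithmetic -- verifying $k-1\geq i$ so that the correct cases of \eqref{eq_2_lemma_row_column_deletion_1}--\eqref{eq_2_lemma_row_column_deletion_4} apply, and verifying $k'<k<j$ so that neither nonzero row is deleted or shifted when row $j$ is removed. An even shorter phrasing is available: deleting column $i$ merely relabels columns $i{+}1,\dots$ down by one, turning $C_k^{\mathbf{A}}$ into $C_{k-1}^{\mathbf{M}_{j,i}}$, while deleting row $j>k$ strikes only a zero entry from that column; but carrying it out through \eqref{eq_2_lemma_row_column_deletion_2} and \eqref{eq_2_lemma_row_column_deletion_4} keeps the argument anchored to the already-established entry formulas.
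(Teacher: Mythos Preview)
Your proposal is correct and follows essentially the same approach as the paper: split $C_{k-1}^{\mathbf{M}_{j,i}}$ by row index into the ranges $x<j$ and $x\geq j$, apply \eqref{eq_2_lemma_row_column_deletion_2} and \eqref{eq_2_lemma_row_column_deletion_4} respectively, and observe that the nonzero entries of $C_k^{\mathbf{A}}$ all sit in rows $<j$ and hence carry over unchanged. Your write-up is in fact slightly more careful than the paper's in making explicit why only cases \eqref{eq_2_lemma_row_column_deletion_2} and \eqref{eq_2_lemma_row_column_deletion_4} are relevant (namely $k-1\geq i$) and why $k'<j$, but the underlying argument is identical.
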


\begin{corollary}\label{corollary:7_1}
	Consider an addition of columns of $\mathbf{A}$, $C_{k_1}^{\mathbf{A}}+C_{k_2}^{\mathbf{A}}+...+C_{k_p}^{\mathbf{A}}=X_{k_p}^N-X_{k_1'}$ where $i<k_1<k_2<...<k_p<j$. The addition of columns of $\mathbf{M}_{j,i}$, i.e., $C_{k_1-1}^{\mathbf{M}_{j,i}}+C_{k_2-1}^{\mathbf{M}_{j,i}}+...+C_{k_p-1}^{\mathbf{M}_{j,i}}$ would be $X_{k_p}^{N-1}-X_{k_1'}^{N-1}$.
\end{corollary}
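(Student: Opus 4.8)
The plan is to read off the claim as a column-by-column application of Theorem~\ref{theorem:7}, followed by the trivial observation that deleting the last coordinate of a vector is a linear operation. Since $i<k_1<k_2<\cdots<k_p<j$, every summand index satisfies $i<k_\beta<j$, so Theorem~\ref{theorem:7} applies to each of them: writing $C_{k_\beta}^{\mathbf{A}}=X_{k_\beta}^N-X_{k_\beta'}^N$ (legitimate by Lemma~\ref{lemma:3}, with the convention that $X_0^N$ is the null vector when $n_0$ is forward-adjacent to $n_{k_\beta}$), Theorem~\ref{theorem:7} gives $C_{k_\beta-1}^{\mathbf{M}_{j,i}}=X_{k_\beta}^{N-1}-X_{k_\beta'}^{N-1}$ for every $\beta=1,\dots,p$. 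By the note following Definition~\ref{definition:3_3} we also have $k_\beta'<k_\beta<j\le N$, so every index that occurs anywhere in the argument — the $k_\beta$, the $k_\beta'$, and in particular the surviving $k_p$ and $k_1'$ — lies in $\{0,1,\dots,N-1\}$; hence all the length-$(N-1)$ vectors $X_\bullet^{N-1}$ below are well defined, and moreover $C_{k_\beta-1}^{\mathbf{M}_{j,i}}$ is literally $C_{k_\beta}^{\mathbf{A}}$ with its (zero) $N$\textsuperscript{th} entry removed.

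Next I would simply add the $p$ identities coming from Theorem~\ref{theorem:7}, obtaining $\sum_{\beta=1}^{p}C_{k_\beta-1}^{\mathbf{M}_{j,i}}=\sum_{\beta=1}^{p}\bigl(X_{k_\beta}^{N-1}-X_{k_\beta'}^{N-1}\bigr)$. The right-hand side is exactly the image of $\sum_{\beta=1}^{p}\bigl(X_{k_\beta}^{N}-X_{k_\beta'}^{N}\bigr)$ under the linear map that deletes the last coordinate; by hypothesis that sum equals $X_{k_p}^N-X_{k_1'}^N$, and deleting the last coordinate of this vector produces precisely $X_{k_p}^{N-1}-X_{k_1'}^{N-1}$ because $k_p\le N-1$ and $k_1'\le N-1$. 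Combining the two displays yields $\sum_{\beta=1}^{p}C_{k_\beta-1}^{\mathbf{M}_{j,i}}=X_{k_p}^{N-1}-X_{k_1'}^{N-1}$, which is the assertion. (Equivalently, one may phrase the whole step as: apply the truncation map to both sides of the given $\mathbf{A}$-identity and invoke Theorem~\ref{theorem:7} on the left.)

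There is no real obstacle here; the only thing demanding attention is the index bookkeeping, and it is routine. One must confirm (i) that $i<k_\beta<j$ for every $\beta$, so that Theorem~\ref{theorem:7} is genuinely applicable to each column, and (ii) that the two indices surviving the cancellation, $k_p$ and $k_1'$, are $\le N-1$, so that truncating $X_{k_p}^N-X_{k_1'}^N$ gives $X_{k_p}^{N-1}-X_{k_1'}^{N-1}$ and not something degenerate. Both follow at once from $i<k_1<\cdots<k_p<j\le N$ together with $C_k^{\mathbf{A}}=X_k^N-X_{k'}^N\Rightarrow k'<k$. In particular, no telescoping has to be re-established: the cancellation that collapses $\sum_\beta\bigl(X_{k_\beta}^{N}-X_{k_\beta'}^{N}\bigr)$ to $X_{k_p}^N-X_{k_1'}^N$ is assumed in the statement, and being a cancellation among standard basis vectors whose indices are all at most $N-1$, it persists verbatim after the last coordinate is dropped.
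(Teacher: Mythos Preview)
Your proposal is correct and follows exactly the route the paper intends: the paper itself says the proof is ``trivial using Theorem~\ref{theorem:7} and therefore omitted,'' and you have written out precisely that argument---apply Theorem~\ref{theorem:7} termwise (legitimate since each $k_\beta$ satisfies $i<k_\beta<j$) and sum. Your truncation-map phrasing is a clean way to justify why the hypothesised identity in $\mathbb{R}^N$ survives in $\mathbb{R}^{N-1}$, and the index checks you flag ($k_\beta'<k_\beta<j\le N$) are exactly the ones needed.
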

\begin{remark}
	The proof is trivial using Theorem~\ref{theorem:7} and therefore omitted.
\end{remark}

\emph{\textbf{Summary}}: Lemma~\ref{lemma:6} shows that the sub-diagonal entries of \emph{the middle-columns}, even in a general case, are unity. This happens because these columns move towards left as the $i$\textsuperscript{th} column of $\mathbf{A}$ is deleted and diagonal entries of these columns while in $\mathbf{A}$ become subdiagonal entries in $\mathbf{M}_{j,i}$. Theorem~\ref{theorem:7} provides a formula to readily obtain these columns from $\mathbf{A}$ and Corollary~\ref{corollary:7_1} shows how the sum of these columns is related to the sum of the parent columns in $\mathbf{A}$. Most importantly, Corollary~\ref{corollary:7_1} puts Theorem~\ref{theorem:4} and its corollary, which was proved for a generic $\mathbf{A}$, in the perspective of $\mathbf{M}_{j,i}$ for a generic case. This is crucial because if $n_i$ is forward connected to $n_j$, $R_{i\rightarrow j}$ will be created by nodes $n_k$ where $i<k<j$. Therefore, the middle columns, i.e. $C_k^{\mathbf{M}_{j,i}}~\forall i\geq k<j$, are crucial as they carry the information regarding if and how $n_i$ is forward connected to $n_j$. Further significance will be evident in the next section.

\subsection{The critical column}\label{subsec:column_critical}
The $(j-1)$\textsuperscript{th} column of $\mathbf{M}_{j,i}$ can at most have one non-zero entry and that will be "-1". This non-zero entry appears at some $\mathbf{M}_{j,i}(m,j-1)$ where $m<j-1$. The entry "$1$" is deleted because the $j$\textsuperscript{th} row of $\mathbf{A}$ is deleted.
\begin{lemma}\label{lemma:8}
	If $C_{j}^{\mathbf{A}}=X_j^N-X_{j'}^N$, $C_{j-1}^{\mathbf{M}_{j,i}}$ must be $-X_{j'}^{N-1}$.
	\begin{proof}
		According to (\ref{eq_2_lemma_row_column_deletion_2}), for $x<j$, $\displaystyle		 C_{j-1}^{\mathbf{M}_{j,i}}(x)=\mathbf{M}_{j,i}(x,j-1)=\mathbf{A}(x,j)= C_j^{\mathbf{A}}(x)$.
		
		For $x\geq j$, according to (\ref{eq_2_lemma_row_column_deletion_3}), $\displaystyle		 C_{j-1}^{\mathbf{M}_{j,i}}(x)=\mathbf{A}(x+1,j)=0$.
		
		It is now clearly seen that the first $j-1$ entries of $C_{j-1}^{\mathbf{M}_{j,i}}$ are equal to the first $j-1$ entries of $C_{j}^{\mathbf{A}}$ and the rest are zero. Thus, $C_j^{\mathbf{M}_{j,i}}=-X_{j'}^{N-1}$.
	\end{proof}
\end{lemma}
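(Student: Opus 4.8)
The plan is to obtain $C_{j-1}^{\mathbf{M}_{j,i}}$ by reading its entries directly off the entry-wise description of $\mathbf{M}_{j,i}$ in \eqref{eq_2_lemma_row_column_deletion_1}--\eqref{eq_2_lemma_row_column_deletion_4}, treating the two ranges $x<j$ and $x\ge j$ of the row index separately. The observation that makes the index bookkeeping go through is that $i<j$ forces $j-1\ge i$, so the column index $y=j-1$ always falls in the ``$y\ge i$'' branch of those formulas; this is precisely why $C_{j-1}^{\mathbf{M}_{j,i}}$ inherits the data of $C_j^{\mathbf{A}}$ (one past the deleted column $i$) rather than of $C_{j-1}^{\mathbf{A}}$.

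First I would handle the coordinates above the deleted row. For $x<j$, equation \eqref{eq_2_lemma_row_column_deletion_2} applied with $y=j-1\ge i$ gives $C_{j-1}^{\mathbf{M}_{j,i}}(x)=\mathbf{M}_{j,i}(x,j-1)=\mathbf{A}(x,j)=C_j^{\mathbf{A}}(x)$, so the first $j-1$ coordinates of $C_{j-1}^{\mathbf{M}_{j,i}}$ agree verbatim with those of $C_j^{\mathbf{A}}$. Next I would handle the coordinates at and below the deleted row. For $x\ge j$, equation \eqref{eq_2_lemma_row_column_deletion_4} with $y=j-1\ge i$ gives $C_{j-1}^{\mathbf{M}_{j,i}}(x)=\mathbf{A}(x+1,j)$, which is $0$ because $x+1>j$ and $\mathbf{A}$ is upper-triangular (Lemma~\ref{lemma:2}); hence all of these coordinates vanish.

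Finally I would assemble the two pieces. Writing $C_j^{\mathbf{A}}=X_j^N-X_{j'}^N$, Lemma~\ref{lemma:3} together with the convention $X_0^N=0$ guarantees $j'<j$. The $+X_j^N$ part of $C_j^{\mathbf{A}}$ is supported on row $j$, which lies in the range discarded in the second step, so it contributes nothing; the $-X_{j'}^N$ part is supported on row $j'<j$, which is retained and keeps its index, since deleting the $j$-th row leaves rows of smaller index in place. This yields $C_{j-1}^{\mathbf{M}_{j,i}}=-X_{j'}^{N-1}$, as claimed.

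I do not expect a genuine obstacle here: the statement is a change-of-index computation once Lemmas~\ref{lemma:2} and \ref{lemma:3} are in hand. The only points needing a little care are (i) verifying $j-1\ge i$ so that the correct branch of \eqref{eq_2_lemma_row_column_deletion_1}--\eqref{eq_2_lemma_row_column_deletion_4} is invoked, and (ii) the degenerate subcase $j'=0$, i.e.\ $n_0$ forward-adjacent to $n_j$ with $C_j^{\mathbf{A}}=X_j^N$, where the argument produces the zero column, consistently with $-X_0^{N-1}=0$.
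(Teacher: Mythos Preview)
Your proof is correct and follows essentially the same route as the paper's: split on $x<j$ versus $x\ge j$ and read the entries of $C_{j-1}^{\mathbf{M}_{j,i}}$ directly from the deletion formulas. You are in fact slightly more careful than the paper, correctly invoking \eqref{eq_2_lemma_row_column_deletion_4} rather than \eqref{eq_2_lemma_row_column_deletion_3} in the $x\ge j$ case (since $y=j-1\ge i$), and explicitly noting $j'<j$ and the degenerate $j'=0$ subcase.
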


\begin{theorem}\label{theorem:9}
	If $C_{k_1}^{\mathbf{A}}+C_{k_2}^{\mathbf{A}}+...+C_{k_p}^{\mathbf{A}}+C_j^{\mathbf{A}}=X_{j}^N-X_{k_1'}^N$ where $i<k_1<k_2<...<k_p<j$ and $C_{k_1}^{\mathbf{A}}=X_{k_1}^N-X_{k_1'}^N$, $C_{k_1-1}^{\mathbf{M}_{j,i}}+C_{k_2-1}^{\mathbf{M}_{j,i}}+...+C_{k_p-1}^{\mathbf{M}_{j,i}}+C_{j-1}^{\mathbf{M}_{j,i}}=-X_{k_1'}^{N-1}$
	\begin{proof}
		Let, $\displaystyle C_j^{\mathbf{A}} = X_j^N-X_{j'}^N$. Substituting this in the given relationship results in
		\begin{equation}\label{eq_theo_9_1}
		C_{k_1}^{\mathbf{A}}+C_{k_2}^{\mathbf{A}}+...+C_{k_p}^{\mathbf{A}}=X_{j'}^N-X_{k_1'}
		\end{equation}
		Now, according to Corollary~\ref{corollary:7_1}, (\ref{eq_theo_9_1}) would imply
		\begin{equation}\label{eq_theo_9_2}
		C_{k_1}^{\mathbf{M}_{j,i}}+C_{k_2}^{\mathbf{M}_{j,i}}+...+C_{k_p}^{\mathbf{M}_{j,i}} = X_{j'}^{N-1}-X_{k_1'}^{N-1}
		\end{equation}
		According to Lemma~\ref{lemma:8},
		\begin{equation}\label{eq_theo_9_3}
		C_j^{\mathbf{A}} = X_j^N-X_{j'}^N \Rightarrow C_{j-1}^{\mathbf{M}_{j,i}}=-X_{j'}^{N-1}
		\end{equation}
		Adding (\ref{eq_theo_9_2}) and (\ref{eq_theo_9_3}), we get, ~~~
		$C_{k_1-1}^{\mathbf{M}_{j,i}}+C_{k_2-1}^{\mathbf{M}_{j,i}}+...+C_{k_p-1}^{\mathbf{M}_{j,i}}+C_{j-1}^{\mathbf{M}_{j,i}}=-X_{k_1'}^{N-1}$
	\end{proof}
\end{theorem}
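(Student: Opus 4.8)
The plan is to reduce the statement to two facts already established — Corollary~\ref{corollary:7_1}, which transports a telescoping sum of middle columns of $\mathbf{A}$ into the corresponding shifted columns of $\mathbf{M}_{j,i}$, and Lemma~\ref{lemma:8}, which pins down the critical column $C_{j-1}^{\mathbf{M}_{j,i}}$ — and then to let the extra terms cancel.

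First I would name the second nonzero entry of the last column: by Lemma~\ref{lemma:3} write $C_j^{\mathbf{A}}=X_j^N-X_{j'}^N$ with $j'<j$. Substituting this into the hypothesis and cancelling $X_j^N$ from both sides peels off the last column, leaving
\[
C_{k_1}^{\mathbf{A}}+C_{k_2}^{\mathbf{A}}+\cdots+C_{k_p}^{\mathbf{A}}=X_{j'}^N-X_{k_1'}^N .
\]
Since $i<k_1<\cdots<k_p<j$ and $C_{k_1}^{\mathbf{A}}=X_{k_1}^N-X_{k_1'}^N$, this is exactly the hypothesis of Corollary~\ref{corollary:7_1}; the top of the telescoping sum being $X_{k_p}^N$, we have $j'=k_p$ (equivalently $n_{k_p}$ is forward-adjacent to $n_j$, which is forced by Theorem~\ref{theorem:4}). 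Corollary~\ref{corollary:7_1} then gives
\[
C_{k_1-1}^{\mathbf{M}_{j,i}}+C_{k_2-1}^{\mathbf{M}_{j,i}}+\cdots+C_{k_p-1}^{\mathbf{M}_{j,i}}=X_{j'}^{N-1}-X_{k_1'}^{N-1} .
\]

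Next I would invoke Lemma~\ref{lemma:8}: from $C_j^{\mathbf{A}}=X_j^N-X_{j'}^N$ it gives $C_{j-1}^{\mathbf{M}_{j,i}}=-X_{j'}^{N-1}$. Adding this equation to the previous display, the two copies of $X_{j'}^{N-1}$ annihilate and what remains is precisely $C_{k_1-1}^{\mathbf{M}_{j,i}}+C_{k_2-1}^{\mathbf{M}_{j,i}}+\cdots+C_{k_p-1}^{\mathbf{M}_{j,i}}+C_{j-1}^{\mathbf{M}_{j,i}}=-X_{k_1'}^{N-1}$, the claimed identity.

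The only step carrying real content is the bookkeeping in the middle paragraph: one must verify that, after removing $C_j^{\mathbf{A}}$, the surviving sum genuinely has the form demanded by Corollary~\ref{corollary:7_1} — in particular that the index $j'$ produced by the last column coincides with the index $k_p$ that caps the telescoping middle sum. Once that identification is in place (it follows from the uniqueness of forward routes, Theorem~\ref{theorem:1}, or directly from Fact-III applied to the hypothesis), everything else is a single cancellation and presents no difficulty.
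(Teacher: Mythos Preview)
Your proposal is correct and follows essentially the same route as the paper: peel off $C_j^{\mathbf{A}}=X_j^N-X_{j'}^N$, apply Corollary~\ref{corollary:7_1} to the remaining middle sum, apply Lemma~\ref{lemma:8} to the critical column, and add. The only difference is that you explicitly justify $j'=k_p$ before invoking Corollary~\ref{corollary:7_1} as literally stated, whereas the paper applies the corollary directly to $X_{j'}^N-X_{k_1'}^N$ without that identification (relying implicitly on the termwise use of Theorem~\ref{theorem:7}); your extra care here is harmless and arguably tidier.
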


\emph{\textbf{Summary}}: Lemma~\ref{lemma:8} in this subsection shows how the critical column in $\mathbf{M}_{j,i}$ is related to its parent column in $\mathbf{A}$. Theorem~\ref{theorem:9} uses Corollary~\ref{corollary:7_1} to generate a result which is critical for making all the columns of $\mathbf{M}_{j,i}$ upper-triangular. This upper-triangularization is discussed next.

\section{Upper-triangularization of $\mathbf{M}_{j,i}$}\label{sec:upper_triangularization}

Matrix $\mathbf{M}_{j,i}$, unlike its parent matrix $\mathbf{A}$, is not upper-triangular. Further, as evident from \eqref{eq:A_minor}, knowledge of $|\mathbf{M}_{j,i}|$ is crucial to understand $\mathbf{A}^{-1}$. It is seen in the earlier section that some columns in $\mathbf{M}_{j,i}$ are upper-triangular just like their parent columns in $\mathbf{A}$. In mathematical terms, $C_k^{\mathbf{M}_{j,i}}$ is upper-triangular if $k<i$ or $k\geq j$. These columns were termed as \emph{the side-columns} and for the considered case they appear on either side of the two vertical dashed lines in Fig.~\ref{fig:3}. Further their diagonal entries are unity; therefore, if the columns within the dashed lines are made upper-triangular, one would not have to worry about the side columns. Now we focus on the columns within the dashed lines: \emph{the middle-columns} and \emph{the critical column} and make them upper-triangular without disturbing the side columns.

The middle-columns, i.e. $C_k^{\mathbf{M}_{j,i}}$ for $i\leq k<j-1$, have their sub-diagonal entries as unity and rest of the lower-triangular entries are zero (according to Lemma~\ref{lemma:6}). With some effort, it can be visualised that if somehow these columns can be right shifted, "1" will appear as the diagonal entry and all lower-triangular entries would be zero. \footnote{These columns were left shifted owing to deletion of the $i$\textsuperscript{th} column of $\mathbf{A}$ while $\mathbf{M}_{j,i}$ was being formed. That is why the diagonal entries appeared at the sub-diagonal entries. Reversing that process, just for these columns, will again make them appear as diagonal entries.} In order to achieve that let us move the $j-1$\textsuperscript{th} column, i.e. the critical column, by a series of column interchanges until it occupies the place of the $i$\textsuperscript{th} column as shown in Fig.~\ref{fig:4}. Thus the side-columns are not disturbed, but the resulting matrix will have all the middle columns made upper-triangular except the critical column which now would appear as the $i$\textsuperscript{th} column.

\begin{figure}[h]
  \centering
    \begin{eqnarray*}
    \begin{tikzpicture}[baseline=-0.8ex, font=\fontsize{0.105in}{0.1in}\selectfont, every left delimiter/.style={xshift=0.5em},every right delimiter/.style={xshift=-0.5em}]
    \matrix (m) [matrix of math nodes,left delimiter={[},right delimiter={]},row sep=0.7em,column sep=-0.3em,text height=0.5ex, text depth=0.2ex]{
    	1 & -1 & 0 & 0 & 0 & 0 & 0 & 0 & 0 & 0 & 0\\[-5pt]
    	0 & 1 & -1 & 0 & -1 & -1 & 0 & 0 & 0 & 0 & 0\\[-5pt]
    	0 & 0 & 1 & 0 & 0 & 0 & 0 & -1 & 0 & 0 & 0\\[-5pt]
    	0 & 0 & 0 & -1 & 0 & 0 & 0 & 0 & 0 & 0 & 0\\[-5pt]
    	0 & 0 & 0 & 1 & 0 & 0 & 0 & 0 & 0 & 0 & 0\\[-5pt]
    	0 & 0 & 0 & 0 & 1 & 0 & 0 & 0 & 0 & 0 & 0\\[-5pt]
    	0 & 0 & 0 & 0 & 0 & 1 & -1 & 0 & 0 & 0 & 0\\[-5pt]
    	0 & 0 & 0 & 0 & 0 & 0 & 0 & 1 & -1 & -1 & 0\\[-5pt]
    	0 & 0 & 0 & 0 & 0 & 0 & 0 & 0 & 1 & 0 & 0\\[-5pt]
    	0 & 0 & 0 & 0 & 0 & 0 & 0 & 0 & 0 & 1 & -1\\[-5pt]
    	0 & 0 & 0 & 0 & 0 & 0 & 0 & 0 & 0 & 0 & 1\\[-5pt]
    };
    \draw[dashed] (-1.4,-2.15) to (-1.4,2.3);
    \draw[dashed] (0.65,-2.15) to (0.65,2.3);
    \draw[<->,blue] (0.4,-2.2) to (0.4,-2.3) to [out=270, in=270] (-0.1,-2.3) to [->] (-0.1,-2.2);

    \begin{pgfonlayer}{background}
    \fill [Gray] (0.15,2.3) rectangle (0.6,-2.21);
    \end{pgfonlayer}
    \end{tikzpicture}
    \xrightarrow{\displaystyle{C_7\leftrightarrow C_6}}
    \begin{tikzpicture}[baseline=-0.8ex, font=\fontsize{0.105in}{0.1in}\selectfont, every left delimiter/.style={xshift=0.5em},every right delimiter/.style={xshift=-0.5em}]
    \matrix (m) [matrix of math nodes,left delimiter={[},right delimiter={]},row sep=0.7em,column sep=-0.3em,text height=0.5ex, text depth=0.2ex]{
    	1 & -1 & 0 & 0 & 0 & 0 & 0 & 0 & 0 & 0 & 0\\[-5pt]
    	0 & 1 & -1 & 0 & -1 & -1 & 0 & 0 & 0 & 0 & 0\\[-5pt]
    	0 & 0 & 1 & 0 & 0 & 0 & 0 & -1 & 0 & 0 & 0\\[-5pt]
    	0 & 0 & 0 & -1 & 0 & 0 & 0 & 0 & 0 & 0 & 0\\[-5pt]
    	0 & 0 & 0 & 1 & 0 & 0 & 0 & 0 & 0 & 0 & 0\\[-5pt]
    	0 & 0 & 0 & 0 & 1 & 0 & 0 & 0 & 0 & 0 & 0\\[-5pt]
    	0 & 0 & 0 & 0 & 0 & 1 & -1 & 0 & 0 & 0 & 0\\[-5pt]
    	0 & 0 & 0 & 0 & 0 & 0 & 0 & 1 & -1 & -1 & 0\\[-5pt]
    	0 & 0 & 0 & 0 & 0 & 0 & 0 & 0 & 1 & 0 & 0\\[-5pt]
    	0 & 0 & 0 & 0 & 0 & 0 & 0 & 0 & 0 & 1 & -1\\[-5pt]
    	0 & 0 & 0 & 0 & 0 & 0 & 0 & 0 & 0 & 0 & 1\\[-5pt]
    };
    \draw[dashed] (-1.4,-2.15) to (-1.4,2.3);
    \draw[dashed] (0.65,-2.15) to (0.65,2.3);
    \draw[<->,blue] (-0.1,-2.2) to (-0.1,-2.3) to [out=270, in=270] (-0.6,-2.3) to [->] (-0.6,-2.2);
    \begin{pgfonlayer}{background}
    \fill [Gray] (-0.35,2.3) rectangle (0.15,-2.21);
    \end{pgfonlayer}
    \end{tikzpicture}\\[-7pt]
    \downarrow C_6\leftrightarrow C_5~~~~~\\[-7pt]
    \begin{tikzpicture}[baseline=-0.8ex, font=\fontsize{0.105in}{0.1in}\selectfont, every left delimiter/.style={xshift=0.5em},every right delimiter/.style={xshift=-0.5em}]
    \matrix (m) [matrix of math nodes,left delimiter={[},right delimiter={]},row sep=0.7em,column sep=-0.3em,text height=0.5ex, text depth=0.2ex]{
    	1 & -1 & 0 & 0 & 0 & 0 & 0 & 0 & 0 & 0 & 0\\[-5pt]
    	0 & 1 & -1 & 0 & -1 & -1 & 0 & 0 & 0 & 0 & 0\\[-5pt]
    	0 & 0 & 1 & 0 & 0 & 0 & 0 & -1 & 0 & 0 & 0\\[-5pt]
    	0 & 0 & 0 & -1 & 0 & 0 & 0 & 0 & 0 & 0 & 0\\[-5pt]
    	0 & 0 & 0 & 1 & 0 & 0 & 0 & 0 & 0 & 0 & 0\\[-5pt]
    	0 & 0 & 0 & 0 & 1 & 0 & 0 & 0 & 0 & 0 & 0\\[-5pt]
    	0 & 0 & 0 & 0 & 0 & 1 & -1 & 0 & 0 & 0 & 0\\[-5pt]
    	0 & 0 & 0 & 0 & 0 & 0 & 0 & 1 & -1 & -1 & 0\\[-5pt]
    	0 & 0 & 0 & 0 & 0 & 0 & 0 & 0 & 1 & 0 & 0\\[-5pt]
    	0 & 0 & 0 & 0 & 0 & 0 & 0 & 0 & 0 & 1 & -1\\[-5pt]
    	0 & 0 & 0 & 0 & 0 & 0 & 0 & 0 & 0 & 0 & 1\\[-5pt]
    };
    \draw[dashed] (-1.4,-2.15) to (-1.4,2.3);
    \draw[dashed] (0.65,-2.15) to (0.65,2.3);
    \begin{pgfonlayer}{background}
    \fill [Gray] (-1.35,2.3) rectangle (-0.85,-2.21);
    \end{pgfonlayer}
    \end{tikzpicture}
    \xleftarrow{\displaystyle{C_5\leftrightarrow C_4}}
    \begin{tikzpicture}[baseline=-0.8ex, font=\fontsize{0.105in}{0.1in}\selectfont, every left delimiter/.style={xshift=0.5em},every right delimiter/.style={xshift=-0.5em}]
    \matrix (m) [matrix of math nodes,left delimiter={[},right delimiter={]},row sep=0.7em,column sep=-0.3em,text height=0.5ex, text depth=0.2ex]{
    	1 & -1 & 0 & 0 & 0 & 0 & 0 & 0 & 0 & 0 & 0\\[-5pt]
    	0 & 1 & -1 & 0 & -1 & -1 & 0 & 0 & 0 & 0 & 0\\[-5pt]
    	0 & 0 & 1 & 0 & 0 & 0 & 0 & -1 & 0 & 0 & 0\\[-5pt]
    	0 & 0 & 0 & -1 & 0 & 0 & 0 & 0 & 0 & 0 & 0\\[-5pt]
    	0 & 0 & 0 & 1 & 0 & 0 & 0 & 0 & 0 & 0 & 0\\[-5pt]
    	0 & 0 & 0 & 0 & 1 & 0 & 0 & 0 & 0 & 0 & 0\\[-5pt]
    	0 & 0 & 0 & 0 & 0 & 1 & -1 & 0 & 0 & 0 & 0\\[-5pt]
    	0 & 0 & 0 & 0 & 0 & 0 & 0 & 1 & -1 & -1 & 0\\[-5pt]
    	0 & 0 & 0 & 0 & 0 & 0 & 0 & 0 & 1 & 0 & 0\\[-5pt]
    	0 & 0 & 0 & 0 & 0 & 0 & 0 & 0 & 0 & 1 & -1\\[-5pt]
    	0 & 0 & 0 & 0 & 0 & 0 & 0 & 0 & 0 & 0 & 1\\[-5pt]
    };
    \draw[dashed] (-1.4,-2.15) to (-1.4,2.3);
    \draw[dashed] (0.65,-2.15) to (0.65,2.3);
    \draw[<->,blue] (-0.6,2.35) to (-0.6,2.45) to [out=90, in=90] (-1.1,2.45) to [->] (-1.1,2.35);
    \begin{pgfonlayer}{background}
    \fill [Gray] (-0.85,2.3) rectangle (-0.35,-2.21);
    \end{pgfonlayer}
    \end{tikzpicture}
    \end{eqnarray*}
  \caption{Moving the critical column (7\textsuperscript{th} column in this case) to the left by series of column interchanges}
  \label{fig:4}
\end{figure}

At this juncture, not only all but one column are upper-triangularized, the diagonal entries of the upper-triangular columns are unity. If we now can upper-triangularize the critical column, its diagonal entry will determine the determinant. Thus the determinant would depend on a single entry after the matrix is made upper-triangular. As it turns out, by adding some of the middle columns to the critical column this can easily be achieved and this is where we make use of Theorem~\ref{theorem:9} which specifically says which columns should be added to the critical column and what the result would be. Let us now start the mathematics to prove the result for a general case.

\subsection{Upper-triangularization of middle columns}\label{subsec:upper_triangularization_middle_col}
\begin{definition}\label{definition:6_1}
	Consider moving the $(j-1)$\textsuperscript{th} column to the $i$\textsuperscript{th} position (moving the $7$\textsuperscript{th} column to the $4$\textsuperscript{th} position in the current example) by a series of column interchange as shown in Fig.~\ref{fig:4}. The resultant matrix is denoted as $\widetilde{\mathbf{M}}_{j,i}$.
\end{definition}

\begin{lemma}\label{lemma:10}
	The columns of $\widetilde{\mathbf{M}}_{j,i}$ are given by $C_k^{\widetilde{\mathbf{M}}_{j,i}}=C_{k}^{\mathbf{M}_{j,i}} ~\forall~k<i$,	 $C_{j-1}^{\mathbf{M}_{j,i}}~\textrm{for}~k=i$, $C_{k-1}^{\mathbf{M}_{j,i}} ~\forall~i<k<j$ and $C_{k}^{\mathbf{M}_{j,i}} ~\forall~k\geq j$.
	\begin{proof}
		The result is obvious for $k<i$ and $k\geq j$ since these columns occupy the same position in $\mathbf{M}_{j,i}$ and $\widetilde{\mathbf{M}}_{j,i}$. By definition, the $(j-1)$\textsuperscript{th} column of $\mathbf{M}_{j,i}$ is shifted to $i$\textsuperscript{th} column in $\widetilde{\mathbf{M}}_{j,i}$ and therefore $C_k^{\widetilde{\mathbf{M}}_{j,i}}=C_{j-1}^{\mathbf{M}_{j,i}}~\textrm{for}~k=i$. By this process, the columns of $\mathbf{M}_{j,i} ~\forall~i\leq k<j$ get respectively shifted to their immediate right position and therefore $C_k^{\widetilde{\mathbf{M}}_{j,i}}=C_{k-1}^{\mathbf{M}_{j,i}} ~\forall~i<k<j$.
	\end{proof}
\end{lemma}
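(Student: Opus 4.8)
The plan is to make precise the informal ``series of column interchanges'' of Definition~\ref{definition:6_1} and then track the induced relabelling of columns. Concretely, moving the $(j-1)$\textsuperscript{th} column of $\mathbf{M}_{j,i}$ into the $i$\textsuperscript{th} slot is achieved by the successive adjacent interchanges $C_{j-1}\leftrightarrow C_{j-2}$, $C_{j-2}\leftrightarrow C_{j-3}$, $\dots$, $C_{i+1}\leftrightarrow C_i$ — exactly the chain depicted in Fig.~\ref{fig:4}. First I would record that each of these interchanges acts only on column positions lying in $\{i,i+1,\dots,j-1\}$, so every column of $\mathbf{M}_{j,i}$ sitting in a position $k<i$ or $k\geq j$ is never moved; this immediately gives $C_k^{\widetilde{\mathbf{M}}_{j,i}}=C_k^{\mathbf{M}_{j,i}}$ for $k<i$ and for $k\geq j$, which are the two ``obvious'' cases.

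For the positions $i\leq k<j$ I would argue by induction on the number $j-1-i$ of interchanges, the base case $j-1=i$ being the single swap $C_{j-1}\leftrightarrow C_i$. In the inductive step one observes that after performing the first interchange $C_{j-1}\leftrightarrow C_{j-2}$ the matrix obtained is precisely $\mathbf{M}_{j,i}$ with its $(j-2)$\textsuperscript{th} and $(j-1)$\textsuperscript{th} columns swapped, and the remaining task — carrying the column now sitting in position $j-2$ down to position $i$ — is the same problem one size smaller. Unwinding the induction shows that the net effect on the column indices is the cyclic shift $\sigma$ defined by $\sigma(j-1)=i$, $\sigma(k)=k+1$ for $i\leq k\leq j-2$, and $\sigma(k)=k$ otherwise; equivalently, the column now occupying position $k$ of $\widetilde{\mathbf{M}}_{j,i}$ is the column that occupied position $\sigma^{-1}(k)$ of $\mathbf{M}_{j,i}$, where $\sigma^{-1}(i)=j-1$, $\sigma^{-1}(k)=k-1$ for $i<k<j$, and $\sigma^{-1}$ fixes everything else.

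Reading off the values of $\sigma^{-1}$ then yields the four cases in the statement: $C_k^{\widetilde{\mathbf{M}}_{j,i}}=C_{j-1}^{\mathbf{M}_{j,i}}$ for $k=i$, $C_k^{\widetilde{\mathbf{M}}_{j,i}}=C_{k-1}^{\mathbf{M}_{j,i}}$ for $i<k<j$, and $C_k^{\widetilde{\mathbf{M}}_{j,i}}=C_k^{\mathbf{M}_{j,i}}$ in the two remaining ranges. There is no genuine analytic difficulty here; the only thing requiring care is the bookkeeping — in particular, not confusing ``the column originally indexed $k$ in $\mathbf{M}_{j,i}$'' with ``the column currently sitting in position $k$'', and getting the single-step shift $k\mapsto k-1$ (rather than $k\mapsto k+1$) correct when passing to the inverse map. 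I expect this indexing hygiene, together with pinning down exactly what the phrase ``a series of column interchanges'' denotes, to be the main (and essentially the only) obstacle.
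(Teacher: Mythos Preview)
Your proposal is correct and follows essentially the same approach as the paper: both arguments simply track how the sequence of adjacent transpositions $C_{j-1}\leftrightarrow C_{j-2},\dots,C_{i+1}\leftrightarrow C_i$ permutes the column indices, observing that positions outside $\{i,\dots,j-1\}$ are fixed and that the net effect on $\{i,\dots,j-1\}$ is the cyclic shift you describe. The paper's proof is a two-sentence sketch of this, whereas you spell out the induction and the permutation $\sigma$ explicitly; the only (harmless) slip is calling the base case $j-1=i$ a ``single swap'' when in fact it involves zero interchanges.
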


\begin{corollary}\label{corollary:10_1}
	All but the $i$\textsuperscript{th} column of $\widetilde{\mathbf{M}}_{j,i}$ are upper-triangular.
	\begin{proof}
		Here it is sufficient to prove that $C_k^{\widetilde{\mathbf{M}}_{j,i}}~\forall~i<k<j$ are upper-triangular. From Lemma~\ref{lemma:10}, $C_k^{\widetilde{\mathbf{M}}_{j,i}}=C_{k-1}^{\mathbf{M}_{j,i}} ~\forall~i<k<j$.		According to Lemma~\ref{lemma:6},
		\begin{equation*}
		C_k^{\widetilde{\mathbf{M}}_{j,i}}(x)=C_{k-1}^{\mathbf{M}_{j,i}}(x)=
		\left\{
		\begin{array}{ll}
		1~\textrm{if}~x=k\\
		0~\textrm{if}~x>k
		\end{array}
		\right.
		\end{equation*}
		Thus,  $C_k^{\widetilde{\mathbf{M}}_{j,i}}$ are upper-triangular for $i<k<j$ with diagonal entries unity.
	\end{proof}
\end{corollary}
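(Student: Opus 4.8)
The plan is to reduce the statement to the three column ranges identified in Lemma~\ref{lemma:10} and dispatch each one using an earlier result. First I would observe that for $k<i$ and for $k\geq j$ we have $C_k^{\widetilde{\mathbf{M}}_{j,i}}=C_k^{\mathbf{M}_{j,i}}$, and these are precisely the side-columns, which Lemma~\ref{lemma:5} already shows to be upper-triangular (indeed with unit diagonal entries). So nothing new is needed for those, and the whole content of the corollary is the assertion about the columns indexed by $i<k<j$. That is exactly why the proof need only establish upper-triangularity of $C_k^{\widetilde{\mathbf{M}}_{j,i}}$ for $i<k<j$.

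For those columns, Lemma~\ref{lemma:10} gives $C_k^{\widetilde{\mathbf{M}}_{j,i}}=C_{k-1}^{\mathbf{M}_{j,i}}$, and as $k$ runs over $i<k<j$ the shifted index $k-1$ runs over $i\leq k-1<j-1$, which is precisely the range of the ``middle-columns'' treated in Lemma~\ref{lemma:6}. Applying Lemma~\ref{lemma:6} with its column index equal to $k-1$, the sub-diagonal entry $\mathbf{M}_{j,i}(k,k-1)$ equals $1$ and every lower-triangular entry strictly below it, namely $\mathbf{M}_{j,i}(x,k-1)$ with $x>k$, vanishes. Re-expressing this through the column relabelling: $C_k^{\widetilde{\mathbf{M}}_{j,i}}(x)=C_{k-1}^{\mathbf{M}_{j,i}}(x)=\mathbf{M}_{j,i}(x,k-1)$, so the entry in row $k$ of column $k$ of $\widetilde{\mathbf{M}}_{j,i}$ is $1$ and the entries in rows $x>k$ of column $k$ are $0$. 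Hence $C_k^{\widetilde{\mathbf{M}}_{j,i}}$ is upper-triangular with unit diagonal entry for every $i<k<j$, and together with the side-columns this proves that every column of $\widetilde{\mathbf{M}}_{j,i}$ except the $i$\textsuperscript{th} is upper-triangular.

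The only delicate point, and the one I would be careful to spell out, is the index shift: the sub-diagonal entry of $C_{k-1}^{\mathbf{M}_{j,i}}$ becomes the \emph{diagonal} entry of $C_k^{\widetilde{\mathbf{M}}_{j,i}}$, because moving the $(j-1)$\textsuperscript{th} column into the $i$\textsuperscript{th} slot shifts each column originally occupying positions $i,\dots,j-2$ one step to the right while leaving the row indices untouched. Making that bookkeeping explicit is all that stands between Lemma~\ref{lemma:6} and the corollary; there is no genuine obstacle, since the statement is essentially a translation of Lemma~\ref{lemma:6} through the relabelling of columns recorded in Lemma~\ref{lemma:10}.
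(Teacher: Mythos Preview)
Your proof is correct and follows essentially the same route as the paper: reduce to the range $i<k<j$ via Lemma~\ref{lemma:10} and Lemma~\ref{lemma:5}, then invoke Lemma~\ref{lemma:6} after the index shift $k\mapsto k-1$ to see that the former sub-diagonal entry becomes the unit diagonal entry. The only difference is that you spell out the side-column case and the index bookkeeping more explicitly than the paper does, which is a harmless (and arguably helpful) elaboration.
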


\begin{lemma}\label{lemma:11}
	$|\widetilde{\mathbf{M}}_{j,i}|= (-1)^{j-i-1}|\mathbf{M}_{j,i}|$.
	\begin{proof}
		The column $C_{j-1}^{\mathbf{M}_{j,i}}$ is shifted to occupy the $i$\textsuperscript{th} column position. This series of column interchanges involves total $j-i-1$ column-interchange operations. Consequently, the determinant of the resulting matrix $\widetilde{\mathbf{M}}_{j,i}$ will be given by $|\widetilde{\mathbf{M}}_{j,i}|= (-1)^{j-i-1}|\mathbf{M}_{j,i}|$.
	\end{proof}
\end{lemma}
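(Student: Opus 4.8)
The plan is to reduce this to the elementary fact from linear algebra that interchanging two columns of a square matrix multiplies its determinant by $-1$, and then merely to count how many such interchanges are involved in passing from $\mathbf{M}_{j,i}$ to $\widetilde{\mathbf{M}}_{j,i}$.

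First I would recall, via Definition~\ref{definition:6_1} and Lemma~\ref{lemma:10}, the precise relationship between the two matrices: $\widetilde{\mathbf{M}}_{j,i}$ is obtained from $\mathbf{M}_{j,i}$ by extracting the critical column $C_{j-1}^{\mathbf{M}_{j,i}}$ and reinserting it in position $i$, while each of the columns originally occupying positions $i, i+1, \dots, j-2$ slides one place to the right and the remaining columns (positions $<i$ and $\geq j$) stay put. In other words, the columns are reordered by a single cyclic permutation on the $j-i$ indices $i, i+1, \dots, j-1$.

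Next I would realise this reordering explicitly as a chain of \emph{adjacent} column interchanges, exactly as illustrated in Fig.~\ref{fig:4}: swap columns $j-1$ and $j-2$, then swap the critical column (now sitting in position $j-2$) with the column in position $j-3$, and continue in this fashion until the critical column reaches position $i$. Each step is a swap of two neighbouring columns, and the total number of steps is $(j-1)-i = j-i-1$. Since every single column interchange negates the determinant, applying $j-i-1$ of them yields
\begin{equation*}
|\widetilde{\mathbf{M}}_{j,i}| = (-1)^{j-i-1}\,|\mathbf{M}_{j,i}|,
\end{equation*}
which is the assertion. Equivalently, one may appeal directly to the sign of a cyclic permutation on $j-i$ symbols, which is $(-1)^{j-i-1}$.

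There is no substantive obstacle here; the only point that needs a moment's care is to be sure that the column rearrangement defining $\widetilde{\mathbf{M}}_{j,i}$ is exactly this one cyclic shift and nothing more — but that has already been pinned down in Lemma~\ref{lemma:10}, so what remains is purely a bookkeeping count of the transpositions.
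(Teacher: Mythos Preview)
Your proposal is correct and follows essentially the same approach as the paper's own proof: both observe that moving the $(j-1)$\textsuperscript{th} column to position $i$ requires $j-i-1$ adjacent column interchanges, each of which negates the determinant. Your write-up is simply more explicit about the bookkeeping and adds the (equivalent) cyclic-permutation remark, but the underlying argument is identical.
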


\subsection{Upper-triangularization of the $i$\textsuperscript{th} column}\label{subsec:upper_triangularization_critical_col}
The $i$\textsuperscript{th} column can be made upper-triangular by suitably adding other columns of $\widetilde{\mathbf{M}}_{j,i}$ to it. How and, more importantly, which columns should be added to it can be found out using Theorem~\ref{theorem:4}. It is also possible to determine what will the diagonal entry of this $i$\textsuperscript{th} column be after it has been made upper-triangular. To demonstrate this fact, consider the following two cases:
\begin{itemize}
	\item \emph{$n_i$ is forward connected to $n_j$}: By Theorem~\ref{theorem:4}, there exist columns $C^{\mathbf{A}}_{k_1}+C^{\mathbf{A}}_{k_2}+\cdots+C^{\mathbf{A}}_{k_p}+C^{\mathbf{A}}_j=X_j^N-X_i^N$ where $i<k_1<k_2<...<k_p<j$. Now, by Theorem~\ref{theorem:9}, $C_{k_1-1}^{\mathbf{M}_{j,i}}+C_{k_2-1}^{\mathbf{M}_{j,i}}+...+C_{k_p-1}^{\mathbf{M}_{j,i}}+C_{j-1}^{\mathbf{M}_{j,i}}=-X_{i}^{N-1}$. Observe that,  $C_{j-1}^{\mathbf{M}_{j,i}}=C_i^{\widetilde{\mathbf{M}}_{j,i}}$ according to Lemma~\ref{lemma:10}. Also, by using  Lemma~\ref{lemma:10}, $C_{k_1-1}^{\mathbf{M}_{j,i}}=C_{k_1}^{\widetilde{\mathbf{M}}_{j,i}}$, $C_{k_2-1}^{\mathbf{M}_{j,i}}=C_{k_2}^{\widetilde{\mathbf{M}}_{j,i}}$, ..., $C_{k_p-1}^{\mathbf{M}_{j,i}}=C_{k_p}^{\widetilde{\mathbf{M}}_{j,i}}$. Therefore,	 $$C_{k_1}^{\widetilde{\mathbf{M}}_{j,i}}+C_{k_2}^{\widetilde{\mathbf{M}}_{j,i}}+...+C_{k_p}^{\widetilde{\mathbf{M}}_{j,i}}+C_i^{\widetilde{\mathbf{M}}_{j,i}}=-X_{i}^{N-1}$$
	Thus by adding $C_{k_1}^{\widetilde{\mathbf{M}}_{j,i}}$, $C_{k_2}^{\widetilde{\mathbf{M}}_{j,i}}$, ..., $C_{k_p}^{\widetilde{\mathbf{M}}_{j,i}}$ to it, $C_i^{\widetilde{\mathbf{M}}_{j,i}}$ can be made upper-triangular and in this process its diagonal entry will be "-1".
	\item \emph{$n_i$ is not forward connected to $n_j$}: By Corollary~\ref{corollary:4_1}, there exist columns $C^{\mathbf{A}}_{k_1}+C^{\mathbf{A}}_{k_2}+\cdots+C^{\mathbf{A}}_{k_p}+C^{\mathbf{A}}_j=X_j^N-X_{i'}^N$ where $i'<i<k_1<k_2<...<k_p<j$. Now, following the same procedures as mentioned above it can be shown that	 $$C_{k_1}^{\widetilde{\mathbf{M}}_{j,i}}+C_{k_2}^{\widetilde{\mathbf{M}}_{j,i}}+...+C_{k_p}^{\widetilde{\mathbf{M}}_{j,i}}+C_i^{\widetilde{\mathbf{M}}_{j,i}}=-X_{i'}^{N-1}$$
	Thus by adding $C_{k_1}^{\widetilde{\mathbf{M}}_{j,i}}$, $C_{k_2}^{\widetilde{\mathbf{M}}_{j,i}}$, ..., $C_{k_p}^{\widetilde{\mathbf{M}}_{j,i}}$ to it, $C_i^{\widetilde{\mathbf{M}}_{j,i}}$ can be made upper-triangular and in this process its diagonal entry will be "0".
\end{itemize}

\subsection{Determinant of $\mathbf{M}_{j,i}$}\label{subsec:det_Mji}
\begin{definition}\label{definition:6_2}
	Let the matrix resulting from diagonalization of the $i$\textsuperscript{th} column of $\widetilde{\mathbf{M}}_{j,i}$ be denoted as $\overline{\mathbf{M}}_{j,i}$.
\end{definition}
Since the addition of columns do not change the determinant, $|\widetilde{\mathbf{M}}_{j,i}|=|\overline{\mathbf{M}}_{j,i}|$. Therefore, according to Lemma~\ref{lemma:11},
\begin{equation}\label{eq_last_det}
|\overline{\mathbf{M}}_{j,i}|=|\widetilde{\mathbf{M}}_{j,i}|=(-1)^{j-i-1}|\mathbf{M}_{j,i}|\Rightarrow |\mathbf{M}_{j,i}|=(-1)^{i+1-j}|\overline{\mathbf{M}}_{j,i}|
\end{equation}

\begin{lemma}\label{lemma:12}
	 $|\mathbf{M}_{j,i}|=(-1)^{i-j}$ if $n_i$ is forward connected to $n_j$, else $|\mathbf{M}_{j,i}|=0$.
	 \begin{proof}
	 	All but the $i$\textsuperscript{th} column of $\widetilde{\mathbf{M}}_{j,i}$ are upper-triangular and the diagonal entries are unity. The same is true for $\overline{\mathbf{M}}_{j,i}$. Additionally for $\overline{\mathbf{M}}_{j,i}$, the diagonal entry is "-1" if $n_i$ is forward connected to $n_j$ and "0" if $n_i$ is not forward connected to $n_j$. Consequently, $|\overline{\mathbf{M}}_{j,i}|=-1$ when $n_i$ is forward connected to $n_j$ and $|\overline{\mathbf{M}}_{j,i}|=0$ otherwise. Therefore, using (\ref{eq_last_det}), $|\mathbf{M}_{j,i}|=(-1)^{i+1-j}\times(-1)=(-1)^{i-j}$ if $n_i$ is forward connected to $n_j$, else $|\mathbf{M}_{j,i}|=0$.
	\end{proof}
\end{lemma}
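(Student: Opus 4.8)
The plan is to assemble the structural facts already established about $\widetilde{\mathbf{M}}_{j,i}$ and $\overline{\mathbf{M}}_{j,i}$, read off the determinant of the triangular matrix $\overline{\mathbf{M}}_{j,i}$, and then carry it back to $|\mathbf{M}_{j,i}|$ through the sign bookkeeping recorded in~\eqref{eq_last_det}. Recall that the point of the preceding subsections was precisely to replace $\mathbf{M}_{j,i}$ — which is not upper-triangular — first by $\widetilde{\mathbf{M}}_{j,i}$, obtained through $j-i-1$ column interchanges (Definition~\ref{definition:6_1}, Lemma~\ref{lemma:11}), and then by $\overline{\mathbf{M}}_{j,i}$, obtained by adding a suitable set of middle columns to the $i$\textsuperscript{th} column (Definition~\ref{definition:6_2}); along this chain $|\mathbf{M}_{j,i}|$ changes from $|\overline{\mathbf{M}}_{j,i}|$ only by the factor $(-1)^{i+1-j}$.

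First I would observe that $\overline{\mathbf{M}}_{j,i}$ is upper-triangular. By Corollary~\ref{corollary:10_1}, every column of $\widetilde{\mathbf{M}}_{j,i}$ other than the $i$\textsuperscript{th} is already upper-triangular with unit diagonal entry, and passing to $\overline{\mathbf{M}}_{j,i}$ modifies only the $i$\textsuperscript{th} column. The case analysis in Section~\ref{subsec:upper_triangularization_critical_col} shows that after the column additions this $i$\textsuperscript{th} column becomes $-X_i^{N-1}$ when $n_i$ is forward connected to $n_j$ (via Theorem~\ref{theorem:4} and Theorem~\ref{theorem:9}) and $-X_{i'}^{N-1}$ with $i'<i$ otherwise (via Corollary~\ref{corollary:4_1}); in either case its only nonzero entry lies on or above the diagonal. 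Hence $|\overline{\mathbf{M}}_{j,i}|$ equals the product of its diagonal entries: all of them are $1$ except the $(i,i)$ entry, which is $-1$ in the forward-connected case and $0$ in the other. So $|\overline{\mathbf{M}}_{j,i}|=-1$ or $|\overline{\mathbf{M}}_{j,i}|=0$ respectively.

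It then remains to substitute into~\eqref{eq_last_det}. In the forward-connected case $|\mathbf{M}_{j,i}|=(-1)^{i+1-j}\cdot(-1)=(-1)^{i+2-j}=(-1)^{i-j}$, using $(-1)^2=1$; in the non-forward-connected case $|\mathbf{M}_{j,i}|=(-1)^{i+1-j}\cdot 0=0$. That finishes the proof.

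There is essentially no obstacle remaining — all the difficulty was front-loaded into Theorems~\ref{theorem:4} and~\ref{theorem:9}, Corollary~\ref{corollary:4_1}, and Lemmas~\ref{lemma:6},~\ref{lemma:10} and~\ref{lemma:11}. The only points warranting care are: (i) confirming that the column additions producing $\overline{\mathbf{M}}_{j,i}$ from $\widetilde{\mathbf{M}}_{j,i}$ are genuine determinant-preserving elementary operations — adding one column to another, performed one at a time, with the already-triangular columns never altered — which is exactly why the remark preceding Definition~\ref{definition:6_2} is invoked; and (ii) the parity arithmetic $(-1)^{i+1-j}(-1)=(-1)^{i-j}$, where one may as well note $(-1)^{i-j}=(-1)^{j-i}$ so the conclusion is symmetric in the way the exponent is written.
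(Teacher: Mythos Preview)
Your proposal is correct and follows essentially the same approach as the paper's own proof: you cite Corollary~\ref{corollary:10_1} and the case analysis of Section~\ref{subsec:upper_triangularization_critical_col} to conclude that $\overline{\mathbf{M}}_{j,i}$ is upper-triangular with diagonal entries all $1$ except for the $(i,i)$ entry, read off $|\overline{\mathbf{M}}_{j,i}|\in\{-1,0\}$, and then carry this back through~\eqref{eq_last_det}. The paper does exactly this, only more tersely; your version is simply more explicit about which earlier results are being invoked and about the parity arithmetic.
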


\begin{theorem}\label{theorem:13}
	 $\mathbf{A}^{-1}(i,j)=1$ if and only if $n_i\in R_{0\rightarrow j}$.
	\begin{proof}
	According to (\ref{eq:A_minor}), $\mathbf{A}^{-1}(i,j)=(-1)^{i+j}|\mathbf{M}_{j,i}|$. Now, using Lemma~\ref{lemma:12}, $\mathbf{A}^{-1}(i,j)=1$ if $n_i$ is forward connected to $n_j$, i.e., $n_i\in R_{0\rightarrow j}$ and  $\mathbf{A}^{-1}(i,j)=1$ if $n_i\notin R_{0\rightarrow j}$.
	\end{proof}		
\end{theorem}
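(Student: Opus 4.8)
The plan is to read off Theorem~\ref{theorem:13} almost directly from the determinant formula \eqref{eq:A_minor} together with Lemma~\ref{lemma:12}, treating separately the three regimes $i<j$, $i=j$, and $i>j$, and finally invoking Corollary~\ref{corollary:1_3} to rephrase ``$n_i$ forward-connected to $n_j$'' as ``$n_i\in R_{0\rightarrow j}$''. All of the genuine work has already been front-loaded into Lemma~\ref{lemma:12}, so what remains is bookkeeping.

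For the super-diagonal case $i<j$, which is the substantive one, I would start from $\mathbf{A}^{-1}(i,j)=(-1)^{i+j}|\mathbf{M}_{j,i}|$ and substitute Lemma~\ref{lemma:12}. If $n_i$ is forward-connected to $n_j$, then $|\mathbf{M}_{j,i}|=(-1)^{i-j}$, so $\mathbf{A}^{-1}(i,j)=(-1)^{i+j}(-1)^{i-j}=(-1)^{2i}=1$; otherwise $|\mathbf{M}_{j,i}|=0$ and the entry is $0$. This already establishes the biconditional for $i<j$.

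For the diagonal case $i=j$, I would note that the diagonal entries of $\mathbf{A}^{-1}$ are all unity (its eigenvalues coincide with those of $\mathbf{A}$, each equal to $1$), while $n_j\in R_{0\rightarrow j}$ holds trivially, so both sides of the claimed equivalence are satisfied. For the strictly lower-triangular case $i>j$, upper-triangularity of $\mathbf{A}^{-1}$ forces $\mathbf{A}^{-1}(i,j)=0$; on the other hand, every node on $R_{0\rightarrow j}$ has index at most $j<i$ by the node-numbering rule, so $n_i\notin R_{0\rightarrow j}$, and the equivalence again holds with both sides failing.

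I do not anticipate any real obstacle: the only points requiring a moment's care are the sign bookkeeping --- checking that the factors $(-1)^{i+j}$ and $(-1)^{i-j}$ cancel to $+1$ rather than $-1$ --- and making sure the edge cases $i=j$ and $i>j$ are handled directly rather than silently assumed away by \eqref{eq:A_minor}, which is phrased only for minors of $\mathbf{A}$. I would close by quoting Corollary~\ref{corollary:1_3} to convert ``forward-connected'' into the ``$n_i\in R_{0\rightarrow j}$'' phrasing in which the theorem is stated.
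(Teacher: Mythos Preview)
Your proposal is correct and follows essentially the same route as the paper: apply \eqref{eq:A_minor} together with Lemma~\ref{lemma:12}, check the sign cancellation $(-1)^{i+j}(-1)^{i-j}=1$, and translate via Corollary~\ref{corollary:1_3}. Your treatment is in fact slightly more careful than the paper's, which restricts attention to the super-diagonal case and leaves the diagonal and sub-diagonal cases implicit (and even contains a typo, writing ``$\mathbf{A}^{-1}(i,j)=1$'' where ``$=0$'' is meant in the non-forward-connected case).
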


\section{Discussion and Summary}
Computed $\mathbf{A}^{-1}$ for the considered case is shown in \eqref{eq:Ainverse} accompanied by the graph beside it. The proved result can be easily seen to be true for the considered case.

\vspace{0.1in}

\begin{minipage}[r]{0.3\textwidth}
    \begin{tikzpicture}
    \draw (0,0)node[ocirc]{} to (0,-1)node[ocirc]{} to (0,-2) to (0,-3) to (0,-4)node[ocirc]{} to (0,-5)node[ocirc]{};
    \draw (0,-2) to (1,-2)node[ocirc]{};
    \draw (0,-2)node[ocirc]{} to (-1,-2)node[ocirc]{} to (-2,-2)node[ocirc]{};
    \draw (0,-3)node[ocirc]{} to (-1,-3) to (-2,-3)node[ocirc]{};
    \draw (-1,-3)node[ocirc]{} to (-1,-4)node[ocirc]{} to (-1,-5)node[ocirc]{};
    %%%%%%%%%%%%%%% naming nodes %%%%%%%%%%%%%%%%%%%%%%
    \node [right=1pt] at (0,0) {$n_0$};
    \node [right=1pt] at (0,-1) {$n_1$};
    \node [right=1pt] at (0,-2.2) {$n_2$};
    \node [right=1pt] at (0,-3) {$n_3$};
    \node [right=1pt] at (0,-4) {$n_4$};
    \node [right=1pt] at (0,-5) {$n_5$};
    \node [right=1pt] at (1,-2) {$n_6$};
    \node [above=1pt] at (-1,-2) {$n_7$};
    \node [left=1pt] at (-2,-2) {$n_8$};
    \node [above=1pt] at (-1,-3) {$n_9$};
    \node [left=1pt] at (-2,-3) {$n_{10}$};
    \node [left=1pt] at (-1,-4) {$n_{11}$};
    \node [left=1pt] at (-1,-5) {$n_{12}$};
    %%%%%%%%%%%%% drawing arrows %%%%%%%%%%%%%%%%%%%%%%%
    \filldraw[draw=black,fill=black] (0,-0.6) --+ (0.1,0.2) --+ (-0.1,0.2);
    \filldraw[draw=black,fill=black] (0,-1.6) --+ (0.1,0.2) --+ (-0.1,0.2);
    \filldraw[draw=black,fill=black] (0,-2.6) --+ (0.1,0.2) --+ (-0.1,0.2);
    \filldraw[draw=black,fill=black] (0,-3.6) --+ (0.1,0.2) --+ (-0.1,0.2);
    \filldraw[draw=black,fill=black] (0,-4.6) --+ (0.1,0.2) --+ (-0.1,0.2);
    \filldraw[draw=black,fill=black] (0.75,-2) --+ (-0.2,0.1) --+ (-0.2,-0.1);
    \filldraw[draw=black,fill=black] (-0.6,-2) --+ (0.2,0.1) --+ (0.2,-0.1);
    \filldraw[draw=black,fill=black] (-1.6,-2) --+ (0.2,0.1) --+ (0.2,-0.1);
    \filldraw[draw=black,fill=black] (-0.6,-3) --+ (0.2,0.1) --+ (0.2,-0.1);
    \filldraw[draw=black,fill=black] (-1.6,-3) --+ (0.2,0.1) --+ (0.2,-0.1);
    \filldraw[draw=black,fill=black] (-1,-3.6) --+ (0.1,0.2) --+ (-0.1,0.2);
    \filldraw[draw=black,fill=black] (-1,-4.6) --+ (0.1,0.2) --+ (-0.1,0.2);
    \end{tikzpicture}
\end{minipage}
\begin{minipage}[c]{0.65\textwidth}
     \begin{equation}\label{eq:Ainverse}
        \mathbf{A}^{-1}=
        \begin{tikzpicture}[baseline=-0.8ex, font=\fontsize{0.105in}{0.1in}\selectfont, every left delimiter/.style={xshift=0.5em},every right delimiter/.style={xshift=-0.5em}]
        \matrix (m) [matrix of math nodes,left delimiter={[},right delimiter={]},row sep=0.7em,column sep=0.3em,text height=0.5ex, text depth=0.2ex]{
            1 & 1 & 1 & 1 & 1 & 1 & 1 & 1 & 1 & 1 & 1 & 1\\[-5pt]
        	0 & 1 & 1 & 1 & 1 & 1 & 1 & 1 & 1 & 1 & 1 & 1\\[-5pt]
        	0 & 0 & 1 & 1 & 1 & 0 & 0 & 0 & 1 & 1 & 1 & 1\\[-5pt]
        	0 & 0 & 0 & 1 & 1 & 0 & 0 & 0 & 0 & 0 & 0 & 0\\[-5pt]
        	0 & 0 & 0 & 0 & 1 & 0 & 0 & 0 & 0 & 0 & 0 & 0\\[-5pt]
        	0 & 0 & 0 & 0 & 0 & 1 & 0 & 0 & 0 & 0 & 0 & 0\\[-5pt]
        	0 & 0 & 0 & 0 & 0 & 0 & 1 & 1 & 0 & 0 & 0 & 0\\[-5pt]
        	0 & 0 & 0 & 0 & 0 & 0 & 0 & 1 & 0 & 0 & 0 & 0\\[-5pt]
        	0 & 0 & 0 & 0 & 0 & 0 & 0 & 0 & 1 & 1 & 1 & 1\\[-5pt]
        	0 & 0 & 0 & 0 & 0 & 0 & 0 & 0 & 0 & 1 & 0 & 0\\[-5pt]
        	0 & 0 & 0 & 0 & 0 & 0 & 0 & 0 & 0 & 0 & 1 & 1\\[-5pt]
        	0 & 0 & 0 & 0 & 0 & 0 & 0 & 0 & 0 & 0 & 0 & 1\\[-5pt]};
        \end{tikzpicture}
     \end{equation}
\end{minipage}

\vspace{0.1in}

In final summary, the properties of forward adjacency matrix $\mathbf{A}$ (and its inverse) that have been proved are as follows -
\begin{itemize}
  \item Entries of $\mathbf{A}^{-1}$ are either 0 or 1.
  \item $\mathbf{A}$ is upper-triangular and consequently $\mathbf{A}^{-1}$ too.
  \item If we consider a particular node $n_k$, $k$\textsuperscript{th} column of $\mathbf{A}$ denotes which node is forward adjacent to it and $k$\textsuperscript{th} column of $\mathbf{A}^{-1}$ denotes all the nodes that are forward connected to it. In other words, $k$\textsuperscript{th} column of $\mathbf{A}^{-1}$ says $n_k$ can be arrived at starting from the \emph{origin}, i.e. $n_0$.
\end{itemize}

Before finishing, the authors would like to draw readers' attention to view the matrices $\mathbf{A}$ and $\mathbf{A}^{-1}$ row-wise. So far, starting from the definition of $\mathbf{A}$, we have viewed the matrices columnwise, however, the rows can be used as well to gain some insight into the graph. The $k$\textsuperscript{th} row of $\mathbf{A}$ denotes which nodes $n_k$ is forward adjacent to. Note that the graph is allowed to arbitrarily branch out from a particular node. This means a node can be forward adjacent to multiple nodes (but multiple nodes cannot be forward adjacent to the same node) and those are denoted with "-1" in a particular row. For instance, $n_2$ is forward adjacent to $n_3$, $n_6$, and $n_7$ in the considered case. Correspondingly, the second row has "-1" exactly as its 3\textsuperscript{rd}, 6\textsuperscript{th} and 7\textsuperscript{th} entry.

Now, let us focus on the rows of $\mathbf{A}^{-1}$. The $k$\textsuperscript{th} row lists all the nodes that $n_k$ is forward connected to. In other words, the $k$\textsuperscript{th} row says which nodes we can approach by travelling forward when we are standing at $n_k$. For instance, $n_5$, $n_6$, $n_8$, $n_{10}$ and $n_{12}$ are the terminal nodes of radial branches and therefore they are not forward connected to any node at all. Correspondingly, we see that the respective rows of $\mathbf{A}$ has only the diagonal as '1' and rest are zeros. Similarly, Row 4, says, node 4 is forward connected to node 5 ONLY, with a '1' at appropriate column index. And row 1 indicates that it is forward connected to all the rest of the nodes, and so has a '1' at all positions.

In this way, the forward adjacency matrix and its inverse provide many visualizations of a radial graph. Recently, the authors found the column property of $\mathbf{A}^{-1}$ sacrosanct to prove some useful results in circuit theory. Branin reported that it was found particularly useful for programming transient analysis of RLC networks and other computer programs as well \cite{branin}.

\bibliographystyle{unsrt}
%\bibliography

\end{document}